\newtheorem{theorem}{Theorem}
\newtheorem{lemma}[theorem]{Lemma}
\newtheorem{proposition}[theorem]{Proposition}
\newtheorem{conjecture}[theorem]{Conjecture}
\theoremstyle{remark}
\newtheorem*{remarks}{Remarks}
\newcommand{\Z}{\mathbb{Z}}
\newcommand{\Q}{\mathbb{Q}}
\newcommand{\N}{\mathbb{N}}
\newcommand{\R}{\mathbb{R}}
\newcommand{\C}{\mathbb{C}}
\renewcommand{\H}{\mathbb{H}}
\newcommand{\gen}{\mathbb{G}}%\operatorname{gen}}
\newcommand{\genus}{\operatorname{gen}}
\newcommand{\ord}{\text{ord}}
\newcommand{\spn}{\mathbb{S}} %\operatorname{spn}}
\newcommand{\spin}{\operatorname{spn}}
\newcommand{\cls}{\operatorname{cls}}
\newcommand{\lan}{\langle}
\newcommand{\ran}{\rangle}
\newcommand{\SL}{\operatorname{SL}}
\numberwithin{equation}{section}
\numberwithin{theorem}{section}
\title[Ternary sums of polygonal numbers]{Almost universal ternary sums of polygonal numbers}
\author{Anna Haensch}
\address{Anna Haensch, Department of Mathematics and Computer Science, Duquesne University, Pittsburgh, PA 15282, USA.}
\email{haenscha@duq.edu}
\author{Ben Kane}
\address{Ben Kane, Department of Mathematics, University of Hong Kong, Pokfulam, Hong Kong.}
\email{bkane@hku.hk}
\thanks{The research of the second author was supported by grant project numbers 27300314, 17302515, and 17316416 of the Research Grants Council. }
\begin{document}

%--------------------Abstract --------------------------%
\begin{abstract}
For a natural number $m$, generalized $m$-gonal numbers are those numbers of the form $p_m(x)=\frac{(m-2)x^2-(m-4)x}{2}$ with $x\in \mathbb Z$.  In this paper we establish conditions on $m$ for which the ternary sum $p_m(x)+p_m(y)+p_m(z)$ is almost universal. 
\end{abstract}
%--------------------Abstract --------------------------%
\date{\today}
\subjclass[2010]{11E20, 11E25, 11E45, 11E81, 11H55, 05A30}
\keywords{sums of polygonal numbers, ternary quadratic polynomials, almost universal forms, theta series, lattice theory and quadratic spaces, modular forms, spinor genus theory}
\maketitle

\section{Introduction} 

For a natural number $m$, the $x$-th generalized $m$-gonal number is given by $p_m(x)=\frac{(m-2)x^2-(m-4)x}{2}$ where $x\in \mathbb Z$.  In 1638, Fermat claimed that every natural number may be written as the sum of at most $3$ triangular numbers, $4$ squares, $5$ pentagonal numbers, and in general $m$ $m$-gonal numbers.  Lagrange proved the four squares theorem (the $m=4$ case) in 1770, Gauss proved the the triangular number theorem (the $m=3$ case) in 1796, and Cauchy proved the full claim in 1813 \cite{Cauchy}.  Guy \cite{Guy} later investigated the minimal number $r_m\in\N$ chosen such that every natural number may be written as the sum of $r_m$ generalized $m$-gonal numbers.  For $m\geq 8$, Guy noted that an elementary argument shows that one needs $m-4$ generalized $m$-gonal numbers to represent $m-4$, so $m-4\leq r_m\leq m$.  However, he pointed out that for large enough $n\in\N$, one could likely represent $n$ with significantly fewer generalized $m$-gonal numbers.  In this paper, we investigate for which $m$ every sufficiently large $n\in\N$ is the sum of three $m$-gonal numbers.  That is to say, we study representations of natural numbers by the ternary sum
\[
P_m(x,y,z):=p_m(x)+p_m(y)+p_m(z),
\]
and we ask for which $m$ the form $P_m$ is almost universal; a form is called \begin{it}almost universal\end{it} if it represents all but finitely many natural numbers.  In other words, we would like to determine the set of $m$ for which the set
\[
\mathcal{S}_m:=\!\left\{n\in\N: \not\exists (x,y,z)\in \Z^3\text{ with } P_m(x,y,z)=n\right\}
\]
is finite.  The set $\mathcal{S}_m$ is those positive integers which are not represented by $P_m$, and we call $P_m$ \begin{it}almost universal\end{it} if $\mathcal{S}_m$ is finite.  
\begin{theorem}\label{thm:main}
If $m\not\equiv 2\pmod{3}$ and $4\nmid m$, then $P_m$ is almost universal.  
\end{theorem}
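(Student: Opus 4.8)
The plan is to convert the question into one about representations of integers as sums of three squares subject to congruence conditions, and then to bring to bear the analytic and arithmetic theory of ternary quadratic forms. \emph{Completing the square} is the first step: multiplying by $8(m-2)$ and setting $X=2(m-2)x-(m-4)$ (and similarly $Y,Z$) yields the identity
\[
8(m-2)P_m(x,y,z)+3(m-4)^2=X^2+Y^2+Z^2,
\]
where $X\equiv Y\equiv Z\equiv -(m-4)\pmod{2(m-2)}$. Hence $n$ is represented by $P_m$ if and only if $N:=8(m-2)n+3(m-4)^2$ is a sum of three squares with each variable lying in the fixed residue class $-(m-4)$ modulo $2(m-2)$; that is, $N$ is represented by a fixed coset $\nu+L$ of a ternary $\Z$-lattice $L$. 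The associated theta series $\theta_{\nu+L}$ is a modular form of weight $3/2$, and the number of representations of $n$ by $P_m$ is, up to the substitution $n\mapsto N$, its $N$-th Fourier coefficient.

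Next I would split $\theta_{\nu+L}$ into its Eisenstein and cuspidal components. By Siegel's theory the Eisenstein coefficient equals the genus representation number and factors as an archimedean density times a product of local densities $\prod_p\beta_p(N)$; it is non-negative, and whenever $N$ is represented in the coset at every place it is in fact $\gg N^{1/2-\epsilon}$. The task is therefore to verify local representability of $N$ for all sufficiently large $n$. At each odd prime this is automatic: $x^2+y^2+z^2$ is $\Z_p$-universal, and the congruence condition only imposes $N\equiv 3(m-4)^2$ modulo the relevant prime power, which holds identically since $N-3(m-4)^2=8(m-2)n$. The sole genuine constraint is the $2$-adic one coming from the fact that sums of three squares omit the classes $4^a(8b+7)$; here the hypothesis $4\nmid m$ is exactly what forces $N$ into an admissible $2$-adic square class for every large $n$. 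Consequently the genus of $\nu+L$ represents all sufficiently large $n$.

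The remaining, and main, difficulty is to pass from representation by the genus to representation by $P_m$ itself, which is governed by the cuspidal part together with spinor genus theory. A weight-$3/2$ cusp form decomposes as a linear combination of unary theta series plus a form in their orthogonal complement; on the complement Duke's bound yields Fourier coefficients $O(N^{1/2-\delta})$ for some $\delta>0$, which is dominated by the Eisenstein main term. The unary theta series, by contrast, are supported on individual square classes with coefficients of size comparable to $N^{1/2}$, and these are precisely the \emph{spinor exceptional} square classes, on which genus representation may fail to imply representation by $P_m$ and which could in principle produce an infinite exceptional set. The crux of the proof is to show that the hypothesis $m\not\equiv 2\pmod 3$ rules this out: I expect a spinor-norm computation at $3$ (and at the other primes dividing $m-2$) to show that the relevant local spinor norm groups are full exactly when $3\nmid(m-2)$, so that on the progression $N=8(m-2)n+3(m-4)^2$ the spinor genus and the genus coincide. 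Equivalently, by the theorem of Duke and Schulze-Pillot, every sufficiently large $n$ represented by the genus is then represented by the spinor genus, and hence by $P_m$.

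Putting these together, for all sufficiently large $n$ the representation count equals $a_E(N)+a_{\mathrm{cusp}}(N)\ge cN^{1/2-\epsilon}-O(N^{1/2-\delta})>0$, so only finitely many $n$ escape representation and $P_m$ is almost universal. The genuinely hard step is the spinor analysis of the third paragraph, since the sum of three squares is the archetypal setting in which spinor exceptional classes obstruct universality, and it is there that the hypothesis $m\not\equiv 2\pmod 3$ must be deployed with care.
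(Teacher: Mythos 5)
Your overall architecture (complete the square, view $n\mapsto N$ as representation of $N$ by a ternary lattice coset, split the weight-$3/2$ theta series into Eisenstein plus unary theta series plus a cusp form orthogonal to unary thetas, beat Duke's $O(N^{3/7+\varepsilon})$ against Siegel's $\gg N^{1/2-\varepsilon}$) is exactly the paper's. But the crux of your third paragraph contains a genuine gap: you dispose of the unary theta contribution by appealing to ``the theorem of Duke and Schulze-Pillot'' and to spinor exceptional square class theory to conclude that, once the local spinor norm groups are full, genus representation implies representation by $P_m$. Those results are proved for \emph{lattices}, not for lattice cosets, and no analogue for cosets exists in the literature. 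The paper is explicit about this: it computes the spinor norm groups $\theta(SO(L_p+\nu))$ and shows there is a single spinor genus when $m\equiv 2\pmod 4$, $m\not\equiv 2\pmod{12}$, but then states that this ``doesn't directly lead to a proof'' precisely because the coset analogues of Schulze-Pillot's and Earnest--Hsia--Hung's results (and even the spinor-genus Siegel--Weil identity, which the paper can only formulate as Conjecture 1 and verify for $m=14$ by finite computation) are unavailable. So as written, your key step invokes a theorem that does not exist.

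The paper closes this hole with an elementary analytic trick that you could adopt: since the support of $\Theta_m$ (forced by the Eisenstein part and nonnegativity of the coefficients) lies on integers $\ell=\ell_n\equiv 3\pmod 8$, any unary theta $\vartheta_{h,t}$ occurring in the decomposition must have $t$ odd with $t\mid \bigl(m-2,\,3\left(\tfrac{m-4}{2}\right)^2\bigr)=(m-2,3)$; when $m\not\equiv 2\pmod 3$ this forces $t=1$, which is impossible because $tr^2=r^2\not\equiv 3\pmod 8$. Hence $\mathcal{U}_m\equiv 0$ unconditionally, with no spinor theory for cosets needed. Two secondary points you gloss over: (i) local representability at odd primes $p\mid(m-2)$ is \emph{not} automatic from universality of $x^2+y^2+z^2$ over $\Z_p$, since the variables are frozen in fixed residue classes modulo $p^{\ord_p(2(m-2))}$; the paper's Lemma 3.3 needs a real argument here (for $p\neq 3$ one uses that $\ell_n$ is a $p$-adic unit, the local square theorem, and a count of which cosets $L_p+t\nu$ of $L_p$ in $M_p$ can represent it, and $p=2$ requires a separate $2$-adic computation). (ii) For Siegel's bound to dominate one must also bound $\ord_p(\ell_n)$ at the bad primes; this is automatic from the congruence $\ell_n\equiv 3\left(\tfrac{m-4}{2}\right)^2\pmod{2(m-2)}$ except at $p=3$ when $\ord_3(m-2)=1$, where the paper argues separately that $3$ is not anisotropic for $\langle 1,1,1\rangle$ and that the local density at $3$ matches a primitive-representation density.
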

\begin{remarks}
\noindent

\noindent
\begin{enumerate}[leftmargin=*,label={\rm(\arabic*)}]
\item
Theorem \ref{thm:main} states that for $m\not\equiv 2\pmod{3}$ and $4\nmid m$, every sufficiently large natural number may be written as the sum of at most three generalized $m$-gonal numbers.  However, its proof relies on Siegel's ineffective bound \cite{Siegel} for the class numbers of imaginary quadratic orders, so the result does not give an explicit bound $n_m$ such that every $n>n_m$ may be written as the sum of three generalized $m$-gonal numbers.
\item
Questions of almost universality have recently been studied by a number of authors, but in a slightly different way.  In most cases, $(m_1,m_2,m_3)\in\N_{\geq 3}^3$ has been fixed and authors investigated representations by weighted sums of the type
\[
ap_{m_1}(x)+bp_{m_2}(y)+cp_{m_3}(z).
\]
In particular, authors worked on the classification of $a,b,c$ for which the above form is almost universal; on the contrary, in Theorem \ref{thm:main} we fix $a$, $b$, and $c$ and vary $m$.  In \cite{KS08}, for example, a classification of such $a,b,c$ was given for $m_1=m_2=4$ and $m_3=3$.  In the case of a weighted sum of triangular numbers, a partial answer was given in \cite{KS08}, and the characterization of such almost universal sums was completed by Chan and Oh in \cite{CO09}.  The most general result to date appears in \cite{H14}, where a characterization of almost universal weighted sums of $m$-gonal numbers is given for $m-2=2p$ with $p$ an odd prime.  In this last case, the results in \cite{H14} imply that $P_m$ is not almost universal (note that $m\equiv 0\pmod{4}$, so this is partially complementary to the result in Theorem \ref{thm:main}).
\end{enumerate}
\end{remarks}

It turns out that the restrictions $m\not\equiv 2\pmod{3}$ and $4\nmid m$ are both necessary in Theorem \ref{thm:main}, but are of a very different nature.  If $4\mid m$, then there is a \begin{it}local obstruction\end{it} to $P_m$ being almost universal, i.e., there is an entire congruence class $A\N_0+B\subseteq \mathcal{S}_m$ because it is not even represented modulo $A$.   Details of these local obstructions may be found in Lemma \ref{lem:local}. 

The restriction $m\not\equiv 2\pmod{3}$ (with $4\nmid m$) is much more delicate, and a seemingly deep connection between the analytic and algebraic theory lies beneath this case.  In this case, there are no local obstructions, but $\mathcal{S}_m$ is not necessarily finite.  To get a better understanding of the set $\mathcal{S}_m$, for $m$ even we define 
\[
\mathcal{S}_{m,3}^{\operatorname{e}}:=\!\left\{n\in \mathcal{S}_m: \exists r\in\Z\text{ with }2(m-2)n+3\left(\frac{m-4}{2}\right)^2 = 3r^2\right\}
\]
and for $m$ odd we define
\[
\mathcal{S}_{m,3}^{\operatorname{o}}:=\!\left\{n\in \mathcal{S}_m: \exists r\in\Z\text{ with }8(m-2)n+3(m-4)^2 = 3r^2\right\}.
\]
We next see that if $m\equiv 2\pmod{3}$, then most of the exceptional set $\mathcal{S}_m$ is contained in $\mathcal{S}_{m,3}^{\operatorname{e}}$ if $m\equiv 2\pmod{4}$ and contained in $\mathcal{S}_{m,3}^{\operatorname{o}}$ if $m$ is odd.
\begin{theorem}\label{thm:2mod3}
\noindent

\noindent
\begin{enumerate}[leftmargin=*,label={\rm(\arabic*)}]
\item
If $m\equiv 2\pmod{12}$, then $\mathcal{S}_m\setminus\mathcal{S}_{m,3}^{\operatorname{e}}$ is finite.  
\item
If $m\equiv 2\pmod{3}$ and $m$ is odd, then $\mathcal{S}_m\setminus\mathcal{S}_{m,3}^{\operatorname{o}}$ is finite.
\end{enumerate}
\end{theorem}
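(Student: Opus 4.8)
The plan is to translate the representation problem into a question about a shifted ternary quadratic form by completing the square, and then to analyze the associated weight $3/2$ theta series through its decomposition into an Eisenstein series, a linear combination of unary theta series, and a genuine cusp form.

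First I would complete the square, using $8(m-2)p_m(x)=(2(m-2)x-(m-4))^2-(m-4)^2$ when $m$ is odd, and the analogous identity with $2(m-2)$ in place of $8(m-2)$ and $\tfrac{m-4}{2}$ in place of $m-4$ when $m$ is even. This shows that $P_m(x,y,z)=n$ is solvable if and only if $N:=8(m-2)n+3(m-4)^2$ (respectively $N:=2(m-2)n+3(\tfrac{m-4}{2})^2$) is represented by $X^2+Y^2+Z^2$ with $X,Y,Z$ lying in a fixed congruence class modulo $2(m-2)$ (respectively $m-2$). Thus $n\in\mathcal{S}_m$ exactly when $N$ admits no such restricted representation, and $\mathcal{S}_{m,3}^{\operatorname{o}}$ (resp. $\mathcal{S}_{m,3}^{\operatorname{e}}$) is precisely the set of $n\in\mathcal{S}_m$ for which $N$ lies in the square class $3\cdot(\Q^\times)^2$, i.e. $N=3r^2$. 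I would then fix the ternary lattice coset $\Lambda+\nu$ encoding these congruence conditions and study the generating function $\theta_{\Lambda+\nu}(\tau)=\sum_n r_{\Lambda+\nu}(N)q^n$.

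The function $\theta_{\Lambda+\nu}$ is a modular form of weight $3/2$ on a suitable congruence subgroup with character, and I would decompose it as $\theta_{\Lambda+\nu}=E+U+f$, where $E$ is the Eisenstein component, $U=\sum_t c_t\vartheta_t$ is a linear combination of unary theta series, and $f$ is a cusp form orthogonal to all unary theta series. Two estimates drive the argument. Since the hypotheses exclude the case $4\mid m$ of Lemma \ref{lem:local}, there are no local obstructions, so every sufficiently large $n$ is locally represented and the Eisenstein coefficient obeys a lower bound $a_E(n)\gg_\varepsilon n^{1/2-\varepsilon}$ coming from positivity of the product of local densities. On the other hand, by Duke's bound for Fourier coefficients of half-integral weight cusp forms orthogonal to unary theta series (via the Shimura lift and subconvexity), the coefficients of $f$ satisfy $a_f(n)\ll n^{1/2-\delta}$ for some $\delta>0$. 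Hence $a_E(n)$ dominates $a_f(n)$ for all large $n$, and the \emph{only} possible obstruction to $r_{\Lambda+\nu}(N)>0$ comes from the unary part $U$, whose coefficients can be of the same order as the main term but are supported on a thin set.

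The heart of the argument — and the main obstacle — is to pin down the support of $U$. A unary theta series $\vartheta_t$ has Fourier coefficients supported on the single square class $t\cdot(\Q^\times)^2$, so $U$ affects the coefficient of $q^n$ only when $N$ lies in a fixed finite union of square classes. I would show, via a local analysis at the prime $3$ together with the spinor genus theory of $X^2+Y^2+Z^2$ restricted to $\Lambda+\nu$, that the congruence $m\equiv 2\pmod 3$ forces the only relevant class to be $3\cdot(\Q^\times)^2$: the spinor exceptional square class of the coset is exactly that of $N=3r^2$, as is already suggested by the diagonal solutions $X=Y=Z$. Consequently, for large $n$ with $N\notin 3\cdot(\Q^\times)^2$ the unary contribution vanishes, giving $r_{\Lambda+\nu}(N)=a_E(n)+O(n^{1/2-\delta})>0$, so $n\notin\mathcal{S}_m$; this yields the finiteness of $\mathcal{S}_m\setminus\mathcal{S}_{m,3}^{\operatorname{o}}$ and of $\mathcal{S}_m\setminus\mathcal{S}_{m,3}^{\operatorname{e}}$, establishing both parts of the theorem. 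The delicate points, which I expect to require the most care, are verifying that no square class other than $3\cdot(\Q^\times)^2$ enters $U$ and computing the spinor norm groups and local densities at $2$, $3$, and the primes dividing $m-2$; these are exactly the computations that localize the phenomenon to $m\equiv 2\pmod 3$ and explain why $\mathcal{S}_{m,3}$, rather than the full $\mathcal{S}_m$, is the irreducible obstruction.
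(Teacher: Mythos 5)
Your analytic skeleton matches the paper's proof (Theorem \ref{thm:m2mod4}): complete the square so that $P_m(x,y,z)=n$ becomes representability of $\ell_n$ by a sum of three squares lying in fixed residue classes, form the weight $3/2$ coset theta series on $\Gamma_1(4N^2)$ with $N=m-2$, decompose it as $\mathcal{E}_m+\mathcal{U}_m+f_m$, and play Duke's $O(\ell^{3/7+\varepsilon})$ cusp-form bound against Siegel's ineffective $\gg\ell^{1/2-\varepsilon}$ lower bound for the Eisenstein coefficients (the latter resting on local representability, Lemma \ref{genrep1}). The genuine gap is at the step you yourself flag as the heart of the matter: you propose to pin down the support of $\mathcal{U}_m$ ``via a local analysis at the prime $3$ together with the spinor genus theory of $X^2+Y^2+Z^2$ restricted to $\Lambda+\nu$,'' i.e., by identifying the spinor exceptional square class of the coset. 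No such theory exists for lattice cosets: the classification of (primitive) spinor exceptions due to Schulze-Pillot and to Earnest--Hsia--Hung is proved only for lattices, and the paper explicitly treats the coset analogue as undeveloped --- this is exactly the content of Conjecture \ref{conj:SiegelWeil} and the remark following Theorem \ref{thm:P14}; even the spinor-genus Siegel--Weil identity for a coset is only verified there for $m=14$ by a finite computation (Proposition \ref{prop:SiegelWeil}). The spinor norm computations the paper does carry out (Proposition \ref{prop:spngen}) serve as motivation and as input to Theorem \ref{thm:P14}, not to this theorem, so as written your key step rests on unavailable machinery.

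Fortunately the fact you need is far weaker than a spinor exception classification, and it follows by elementary congruences --- this is what the paper actually does. Any $\vartheta_{h,t}$ occurring in $\mathcal{U}_m$ has coefficients supported on $\ell=tr^2$ with $t$ a squarefree divisor of $N$ (resp.\ $2N$ in the odd case); since the coefficients of $\Theta_m$ are nonnegative and $\mathcal{E}_m$ is supported on the $\ell_n$, any unary summand must also be supported on the progression $\ell_n=2(m-2)n+3\left(\frac{m-4}{2}\right)^2$ (resp.\ $8(m-2)n+3(m-4)^2$). Hence $t\mid\left(m-2,3\left(\frac{m-4}{2}\right)^2\right)$; any common prime divisor of $m-2$ and $m-4$ divides $2$, and $\ell_n$ is odd so $t$ is odd, giving $t\mid(m-2,3)=3$ when $m\equiv 2\pmod{3}$. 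Finally $t=1$ is impossible because $\ell_n\equiv 3\pmod{8}$ is never a square, so $t=3$ and the surviving exceptions lie exactly in $3\Z^2$, which is the definition of $\mathcal{S}_{m,3}^{\operatorname{e}}$ and $\mathcal{S}_{m,3}^{\operatorname{o}}$. One further point your sketch glosses: the Siegel lower bound for $\mathcal{E}_m$ requires bounded divisibility of $\ell_n$ at the bad primes; the paper obtains this from the congruence conditions for $p\mid m-2$ with $p\neq 3$, and, when $\ord_3(m-2)=1$, from the observation that $X^2+Y^2+Z^2\equiv 0\pmod{3}$ forces $3$ to divide all or none of $X,Y,Z$, together with the fact that $3$ is not an anisotropic prime for $\lan 1,1,1\ran$.
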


The proof of Theorem \ref{thm:main} mainly uses the analytic approach and relies on ineffective bounds of class numbers.  However, this approach fails to gain any control in determining the sets $\mathcal{S}_{m,3}^{\operatorname{e}}$ and $\mathcal{S}_{m,3}^{\operatorname{o}}$.  One is hence motivated to blend the two approaches together in order to investigate these sets.  From the algebraic point of view, representations of an integer $n$ by $P_m$ is equivalent to representations of a related integer by a lattice coset $L+\nu$, where $L=L_{(m)}$ and $\nu=\nu_{(m)}$ are completely determined by $m$ (for the precise formulation of $L$ and $\nu$ see Section \ref{sec:prelim}).  To explain why combining the algebraic and analytic theories may be beneficial, we recall an important interplay between the analytic and algebraic theories which occurs when $P_m$ is replaced with the quadratic form $Q$ on the positive-definite ternary lattice $L$, called the \begin{it}norm\end{it} on $L$, which we later emulate.  To understand the link, for such a lattice $L$, let $\mathcal{L}$ denote the primitive elements of $L$ (those which are not non-zero integral multiples of other elements of $L$) and set
\[
\mathcal{S}_{L}:=\{n\in\N : \not\exists \alpha\in \mathcal{L}\text{ with }Q(\alpha)=n\}.
\]
Since $L$ is a positive-definite lattice, there will always be local obstructions at an odd number of finite primes, but our main consideration is those $n\in\mathcal{S}_{L}$ which are locally represented, which we refer to as \begin{it}locally admissible\end{it}.  Moreover, there are finitely many primes $p$ (known as anisotropic primes) for which every $\nu\in L$ with $Q(\nu)$ highly divisible by $p$ is necessarily imprimitive (i.e., $\nu=p\nu'$ for some $\nu'\in L$).  Therefore, if $\ord_p(n)$ is large, we immediately conclude that $n\in \mathcal{S}_L$, so we restrict $\ord_p(n)$.  Using the analytic theory, one can show that for the special case where $L$ is a lattice, the subset of $n\in \mathcal{S}_L$ which are both locally admissible and have bounded $p$-adic order (by a specific constant depending on $p$) at all anisotropic primes $p$ is finite outside of finitely many square classes $t_1\Z^2,\dots,t_{\ell}\Z^2$.  This follows from a result of Duke and Schulze-Pillot in \cite{DS-P90}.  The behavior inside these square classes is explained via the spinor norm map in the algebraic theory; this occurs by realizing $t_j\Z^2$ as a spinor exceptional square class; the {\em primitive spinor exceptions} for the genus of $L$ are those integers which are primitively represented by some but not all of the spinor genera in the genus of $L$. These primitive spinor exceptions are determined by Earnest, Hsia, and Hung in \cite{EH94}.  With additional investigation one can use these results to determine the existence of infinite subsets of admissible elements of $\mathcal{S}_{L}$; from this one can determine that if the subset of admissible elements of $\mathcal{S}_L$ with bounded divisibility at the anisotropic primes is infinite, then there is at least one spinor exceptional square class.  

Returning to our case $P_m$, one would expect a similar theory of spinor exceptional square classes to emerge if one could link the algebraic and analytic approaches.  It is revealing that for $4\nmid m$ the only possibly infinite part of $n\in \mathcal{S}_m$ occurs when $An+B$ is within the square class $3\Z^2$, hinting at a synthesis between the approaches yet to be investigated.  In order to state a conjectural link in our case, we next recall the link between the two approaches in a little more detail.

The main synthesis between the analytic and algebraic theories goes through the Siegel--Weil (mass) formula.  For a lattice $L_0$ in a positive-definite space, let $\gen(L_0)$ be a set of representatives of the classes in the genus $\genus(L_0)$ of $L_0$.  One version of the Siegel--Weil formula states that 
\[
\mathcal{E}_{\genus\!\left(L_0\right)}:=\frac{1}{\sum_{L\in\gen\!\left(L_0\right)} \omega_L^{-1}}\sum_{L\in \gen\!\left(L_0\right)} \frac{\Theta_L}{\omega_L}
\]
is a certain Eisenstein series.  Here $\Theta_L$ is the theta function associated to $L$ (i.e., the generating function for the elements of $L$ of a given norm; see \eqref{eqn:thetadef}) and $\omega_L$ is the number of \begin{it}automorphs\end{it} of $L$ (i.e., the number of linear \begin{it}isometries\end{it} from $L$ to itself; these are invertible linear maps on the vector space $\Q L$ which fix $L$ and preserve the associated quadratic form $Q$).  If $L_0$ has rank $3$, then if one instead takes the associated sum over a set $\spn(L_0)$ of representatives of the classes of lattices 
in the spinor genus $\spin(L_0)$, one obtains
\begin{equation}\label{eqn:spnquadratic}
\frac{1}{\sum_{L\in\spn(L_0)} \omega_L^{-1}}\sum_{L\in \spn(L_0)} \frac{\Theta_L}{\omega_L}= \mathcal{E}_{\genus(L_0)} + \mathcal{U}_{\spin(L_0)},
\end{equation}
where $\mathcal{U}_{\spin(L_0)}$ is a linear combination of unary theta functions \cite{SP,SP2}.  The Fourier coefficients of $\mathcal{U}_{\spin(L_0)}$ count the excess or deficiency of the weighted average of the number of representations by the spinor genus of $L_0$ when compared with the weighted average of the number of representations by the genus, giving a direct connection back to the algebraic theory, the spinor norm map, and spinor exceptions.  The key observation which makes \eqref{eqn:spnquadratic} useful is that the left-hand side is a weighted average of modular forms all of whose coefficients are non-negative.  Hence if the $n$-th coefficient of this sum is zero, then the $n$-th coefficient of each summand must also be zero, and these coefficients count the number of representations of $n$.  On the other hand, the functions appearing on the right-hand side of \eqref{eqn:spnquadratic} are special types of modular forms whose Fourier coefficients may be explicitly computed.

After rewriting the question about representations by $P_m$ as a question about representations by a particular lattice coset $L_{(m)}+\nu_{(m)}$ (defined in \eqref{eqn:Ldef} and \eqref{eqn:nudef}), one would expect such a theory to hold in our case as well.  Indeed, the Siegel--Weil formula for the genus of every lattice coset $L+\nu$ was proven by van der Blij \cite{vanderBlij} and then later independently by Shimura \cite{Shimura2004}, who showed that  
\[
\Theta_{\genus(L+\nu)}=\mathcal{E}_{\genus(L+\nu)}:=\frac{1}{\sum_{M+\nu'\in \gen(L+\nu)} \omega_{M+\nu'}^{-1}}\sum_{M+\nu'\in \gen(L+\nu)} \frac{\Theta_{M+\nu'}}{\omega_{M+\nu'}}
\]
is an Eisenstein series, where $\omega_{M+\nu'}$ is the number of automorphs of the lattice coset and $\gen(L+\nu)$ denotes a complete set of representatives of the classes in the genus of $L+\nu$.  Kneser further showed in \cite{Kneser2} how this formula for the genus of lattice cosets follows by investigating the Haar measure on the orthogonal group, but we do not take that perspective in this paper.  We conjecture that the expected link holds in the same way for spinor genera of lattice cosets. 
\begin{conjecture}\label{conj:SiegelWeil}
We have 
\[
\Theta_{\spin(L+\nu)}:=\frac{1}{\sum_{M+\nu'\in \spn(L+\nu)} \omega_{M+\nu'}^{-1}}\sum_{M+\nu'\in \spn(L+\nu)} \frac{\Theta_{M+\nu'}}{\omega_{M+\nu'}}=\mathcal{E}_{\genus(L+\nu)} + \mathcal{U}_{\spin(L+\nu)},
\]
where $\mathcal{U}_{\spin(L+\nu)}$ is a linear combination of unary theta functions and $\spn(L+\nu)$ denotes a set of representatives of the classes in the spinor genus of $L+\nu$.
\end{conjecture}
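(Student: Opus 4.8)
The plan is to mirror Schulze-Pillot's proof of the lattice identity \eqref{eqn:spnquadratic} from \cite{SP,SP2}, replacing lattices by lattice cosets throughout and substituting the coset Siegel--Weil formula of van der Blij \cite{vanderBlij} and Shimura \cite{Shimura2004} for the classical one. Proving the conjecture is equivalent to showing that the discrepancy
\[
f := \Theta_{\spin(L+\nu)} - \mathcal{E}_{\genus(L+\nu)}
\]
is a linear combination of unary theta functions, in which case we set $\mathcal{U}_{\spin(L+\nu)}:=f$. Since $\Theta_{\genus(L+\nu)} = \mathcal{E}_{\genus(L+\nu)}$ by the coset Siegel--Weil formula, we may equivalently study $f = \Theta_{\spin(L+\nu)} - \Theta_{\genus(L+\nu)}$, the weighted theta average over the spinor genus minus that over the full genus.

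First I would establish that each $\Theta_{M+\nu'}$ is a holomorphic modular form of weight $3/2$ on a suitable congruence subgroup with the appropriate multiplier system; this is the standard modularity of theta series attached to a ternary lattice coset. Next I would argue that $f$ is a cusp form. The Eisenstein component of a weight-$3/2$ theta series is governed entirely by the local representation densities, which are constant across the genus and hence across every spinor genus it contains; since $\Theta_{\genus(L+\nu)}=\mathcal{E}_{\genus(L+\nu)}$ is precisely the projection of any coset theta series in the genus onto the Eisenstein subspace, the difference $f$ has vanishing Eisenstein part and is therefore cuspidal.

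The crux is to show that this cusp form $f$ lies in the span of unary theta functions. In the lattice setting this is accomplished by comparing two families of Hecke operators: the operators $T_{p^2}$ acting on weight-$3/2$ forms, and the \emph{algebraic} Hecke operators acting on the free module generated by the classes of the genus (Eichler's theory, developed in the spinor context in \cite{SP,SP2}). The spinor norm map realizes the difference between spinor genera within a fixed genus as a genus character, and under the Shimura correspondence the eigenforms carrying this character lift to weight-$2$ \emph{Eisenstein} series rather than to cusp forms; the weight-$3/2$ forms exhibiting this Shimura--Eisenstein behavior are exactly the unary theta functions. I would therefore (i) develop the coset analog of these algebraic Hecke operators on the $\gen(L+\nu)$-module, (ii) verify their compatibility with $T_{p^2}$ under the map $M+\nu' \mapsto \Theta_{M+\nu'}$, and (iii) compute the Hecke eigenvalues of $f$ and match them against those of unary theta functions.

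The main obstacle is step (iii), and more fundamentally the coset Hecke theory underlying it. For honest lattices the dictionary between the spin-group Hecke algebra and the half-integral-weight Hecke algebra is classical, the relevant asymptotics come from \cite{DS-P90}, and the spinor-exceptional square classes are completely classified by Earnest, Hsia, and Hung \cite{EH94}. No such dictionary is presently available for lattice cosets: one must either build the Hecke action on cosets directly and prove the required commutation with $T_{p^2}$, or embed the coset representation problem into a genuine lattice problem in one higher dimension and descend. Establishing that the resulting cuspidal discrepancy is spanned by unary theta functions---equivalently, that the spinor-genus versus genus difference for cosets is detected only by dihedral (CM) forms---is exactly the content beyond the reach of our purely analytic methods, which is why we can at present only conjecture the identity rather than prove it.
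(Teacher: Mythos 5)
You have not proved the statement, and you say so yourself: steps (i)--(iii) of your program --- constructing algebraic Hecke operators on the module spanned by the classes in $\genus(L+\nu)$, proving their compatibility with $T_{p^2}$ under $M+\nu'\mapsto\Theta_{M+\nu'}$, and matching eigenvalues of the discrepancy against unary theta functions --- are precisely the pieces of coset Hecke theory that do not yet exist. That is the genuine gap, and it is why the paper states this as Conjecture \ref{conj:SiegelWeil} rather than a theorem: the authors' remark after Theorem \ref{thm:P14} sketches exactly your route, saying a general proof would require coset analogs of Schulze-Pillot's spinor-genus results \cite{SP80} and of the Earnest--Hsia--Hung classification of primitive spinor exceptions \cite{EH94}. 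So your proposal is a correct diagnosis of the obstruction, matching the authors' own assessment, but as a proof it establishes at most the cuspidality of $f:=\Theta_{\spin(L+\nu)}-\mathcal{E}_{\genus(L+\nu)}$ --- and even that step is asserted rather than proved, since ``the Eisenstein part is constant across the genus'' for \emph{cosets} requires checking that the constant terms of $\Theta_{M+\nu'}$ at every cusp depend only on the genus; this is classical for lattices but needs verification in the shifted setting (it is plausible via the theta transformation formula together with Shimura's result that $\Theta_{\genus(L+\nu)}=\mathcal{E}_{\genus(L+\nu)}$ \cite{vanderBlij,Shimura2004}, but it is not automatic).

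It is also worth contrasting your approach with what the paper actually proves, namely the single instance $m=14$ (Proposition \ref{prop:SiegelWeil}), which uses no Hecke theory at all. There the authors explicitly determine the four classes in $\genus(L+\nu)$ and their partition into two spinor genera (Lemma \ref{lem:genspnm=14}), count automorphs, write down the two weighted averages \eqref{eqn:Thetagenm=14}, guess the exact unary correction $-\frac{1}{8}\vartheta_{1,3,12}$, and then verify the identity \eqref{eqn:Thetaspnm=14} as an equality of weight-$3/2$ forms on $\Gamma_1(4\cdot 12^2)$ via the valence formula, comparing $\frac{3}{24}\cdot 221184=27648$ Fourier coefficients by machine. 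The paper's remark notes that any \emph{individual} lattice coset can be handled by such a (possibly long) finite computation; only the uniform statement for all cosets requires the missing algebraic theory. If your aim is to contribute a proof of what is actually provable today, the Sturm-bound verification is the method; your Hecke-theoretic outline should be framed as a reduction of the general conjecture to the construction of a coset Hecke dictionary (or to your alternative suggestion of embedding the coset problem into a lattice problem in higher rank and descending), not as a proof.
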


Conjecture \ref{conj:SiegelWeil} is useful in two different ways.  Firstly, it shows that the number of representations by the spinor genus is usually the same as the number of representations by the genus, and secondly it is useful for showing that certain integers in the support of the unary theta functions are \underline{not} represented by a given lattice coset.  To better understand the utility of Conjecture \ref{conj:SiegelWeil} and to motivate why we believe it to be true, we return to $P_m$.  In particular, for $m=14$, a finite calculation yields the following.
\begin{proposition}\label{prop:SiegelWeil}
The theta function $\Theta_{\spin(L_{(14)}+\nu_{(14)})}$ satisfies Conjecture \ref{conj:SiegelWeil}.
\end{proposition}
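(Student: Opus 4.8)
The plan is to reduce the claim to a finite comparison of Fourier coefficients of weight $3/2$ modular forms. First I would make the lattice coset explicit. For $m=14$ we have $p_{14}(x)=6x^2-5x$, so completing the square gives
\[
24\,P_{14}(x,y,z)+75=(12x-5)^2+(12y-5)^2+(12z-5)^2.
\]
Hence $P_{14}(x,y,z)=n$ if and only if $24n+75$ is represented by $X^2+Y^2+Z^2$ with $X\equiv Y\equiv Z\equiv 7\pmod{12}$; this is precisely the statement that $24n+75$ is a norm of an element of the coset $L_{(14)}+\nu_{(14)}$. Consequently $\Theta_{L_{(14)}+\nu_{(14)}}$, and therefore the weighted averages $\Theta_{\spin(L_{(14)}+\nu_{(14)})}$ and $\mathcal{E}_{\genus(L_{(14)}+\nu_{(14)})}$, are modular forms of weight $3/2$ on some $\Gamma_0(N)$ with a fixed character, where $N$ is determined by the level of the coset. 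By the Siegel--Weil formula for cosets of van der Blij and Shimura we already know that $\Theta_{\genus(L_{(14)}+\nu_{(14)})}=\mathcal{E}_{\genus(L_{(14)}+\nu_{(14)})}$ is an Eisenstein series, so the difference
\[
g:=\Theta_{\spin(L_{(14)}+\nu_{(14)})}-\mathcal{E}_{\genus(L_{(14)}+\nu_{(14)})}
\]
is a cusp form of weight $3/2$ and level $N$. The task is to show that $g$ is a linear combination of unary theta functions.

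Next I would carry out the algebraic side. Working locally at the primes dividing the level (the delicate primes being $2$ and $3$), I would compute the relevant local spinor norm groups, determine the number of spinor genera in $\genus(L_{(14)}+\nu_{(14)})$, and select representatives $M+\nu'$ of the classes in $\spin(L_{(14)}+\nu_{(14)})$ together with their automorph numbers $\omega_{M+\nu'}$. If the genus turns out to consist of a single spinor genus, then $g=0$ and the claim is trivial, so the substance of the computation lies in the case where the genus splits; in that case I would form the weighted theta series defining $\Theta_{\spin(L_{(14)}+\nu_{(14)})}$ explicitly and hence obtain the Fourier expansion of $g$ to as many terms as needed.

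Finally I would identify the unary theta functions and match. The definition of $\mathcal{S}_{14,3}^{\operatorname{e}}$ preceding Theorem \ref{thm:2mod3} shows that the only square class carrying a possible infinite exceptional set is the one given by $24n+75=3r^2$, which pins down the relevant weight $3/2$ unary theta functions to those of the shape $\sum_{r}\psi(r)\,r\,q^{3r^2}$ for suitable odd characters $\psi$ whose conductor divides the level, a finite-dimensional space that I would write down explicitly. I would then determine the coefficients $c_i$ by matching the first few Fourier coefficients of $g$ against $\sum_i c_i \sum_r \psi_i(r)\,r\,q^{3r^2}$, and verify the resulting identity by comparing Fourier coefficients on both sides up to the Sturm bound for weight $3/2$ forms on $\Gamma_0(N)$. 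Since equality up to the Sturm bound forces equality of the forms, this establishes that $g$ is a linear combination of unary theta functions, which is exactly the assertion of Conjecture \ref{conj:SiegelWeil} in this case.

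The main obstacle is the algebraic bookkeeping of the coset spinor genus: the theory of spinor genera for lattice cosets is less standardized than for lattices, and the local spinor norm computations at $p=2$ and $p=3$ (where the coset interacts with the square class $3\Z^2$) require care. Once the spinor genus representatives and automorph numbers are correctly in hand, the remaining work of choosing the candidate unary theta functions and comparing coefficients up to the Sturm bound is a purely finite and unconditional calculation, which is why a direct computation suffices to establish the proposition for $m=14$.
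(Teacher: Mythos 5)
Your plan is essentially the paper's proof: the paper likewise determines the spinor genus decomposition algebraically (Lemma \ref{lem:genspnm=14} shows the genus of $L_{(14)}+\nu_{(14)}$ consists of four classes $L+\nu$, $L+5\nu$, $L+\mu$, $L+5\mu$ with $\mu=\frac{1}{12}(5e_1+e_2+e_3)$, splitting into two spinor genera $\{L+\nu,L+\mu\}$ and $\{L+5\nu,L+5\mu\}$ with automorph numbers $6$ and $2$), writes down the candidate identity $\Theta_{\spin(L+\nu)}=\Theta_{\genus(L+\nu)}-\frac{1}{8}\vartheta_{1,3,12}$, and verifies it by a finite coefficient check via the valence formula. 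So in outline you are on exactly the right track, and your identification of the relevant square class $3\Z^2$ correctly pins down the candidate unary theta functions.

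One step of your verification is wrong as stated, and it affects the size of the finite check: the coset theta series here are \emph{not} modular on $\Gamma_0(N)$ with a fixed character. Under $\left(\begin{smallmatrix}a&b\\c&d\end{smallmatrix}\right)\in\Gamma_0$ the coset shift transforms as $\nu\mapsto a\nu$, and for $a\equiv 5\pmod{12}$ this sends $L+\nu$ to $L+5\nu$, which by Lemma \ref{lem:genspnm=14} lies in the \emph{other} spinor genus; thus $\Theta_{L+\nu}$ and $\Theta_{\spin(L+\nu)}$ are permuted rather than fixed up to a character, and they live genuinely on $\Gamma_1(4N^2)=\Gamma_1(576)$ (this is exactly what Shimura's Proposition 2.1 provides, and it is consistent with the fact that the two spinor genera have different theta series). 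Consequently the Sturm/valence bound must be computed for $\Gamma_1(576)$, giving $\frac{3/2}{12}\cdot 221184=27648$ coefficients as in the paper, whereas the bound for $\Gamma_0(576)$ with nebentypus would be only $\frac{3/2}{12}\cdot 1152=144$ coefficients --- checking that many would not constitute a proof. With the group corrected, your argument (cuspidality of $g$ from the van der Blij--Shimura Siegel--Weil formula, explicit spinor-genus average from the local spinor norm computation at $2$ and $3$, and coefficient comparison up to the $\Gamma_1(576)$ bound) is precisely the paper's proof.
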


As stated above, one of the main advantages of Proposition \ref{prop:SiegelWeil} is that one can use it to show that the Fourier coefficients of $\Theta_{\spin(L_{14}+\nu_{14})}$ usually agree with those of 
\[
\mathcal{E}_m:=\mathcal{E}_{\genus(L_{(m)}+\nu_{(m)})}.
\]
However, we specifically use Proposition \ref{prop:SiegelWeil} to investigate the coefficients supported by the unary theta functions to prove that infinitely many coefficients of $\Theta_{14}$ in these square classes vanish, where 
\[
\Theta_m:=\Theta_{L_{(m)}+\nu_{(m)}}.
\]
We then build off of this to use Proposition \ref{prop:SiegelWeil} to prove that $P_{m}$ is not almost universal for every $m\equiv 2\pmod{12}$.

\begin{theorem}\label{thm:P14}
For every $m\equiv 2\pmod{12}$, the form $P_{m}$ is not almost universal.
\end{theorem}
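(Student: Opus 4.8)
The plan is to combine the analytic reduction of Theorem \ref{thm:2mod3}(1) with the spinor genus Siegel--Weil decomposition certified by Proposition \ref{prop:SiegelWeil}. Since $m\equiv 2\pmod{12}$ forces $m\equiv 2\pmod 3$ and $4\nmid m$, we are in the delicate regime where there are no local obstructions but $\mathcal{S}_m$ may be infinite; moreover part (1) of Theorem \ref{thm:2mod3} tells us that $\mathcal{S}_m\setminus\mathcal{S}_{m,3}^{\operatorname{e}}$ is finite. Hence $P_m$ fails to be almost universal precisely when $\mathcal{S}_{m,3}^{\operatorname{e}}$ is infinite, and it suffices to produce infinitely many $n$ for which $N:=2(m-2)n+3\left(\tfrac{m-4}{2}\right)^2$ lies in the square class $3\Z^2$, is locally represented everywhere by the coset $L_{(m)}+\nu_{(m)}$ (which holds since there are no local obstructions), yet is not represented globally. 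By completing the square a representation $P_m(x,y,z)=n$ corresponds exactly to a representation of $N$ by $L_{(m)}+\nu_{(m)}$, so it is the global non-representation of these $N$ that must be forced.

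Next I would pass to the spinor genus. The series $\Theta_{\spin(L_{(m)}+\nu_{(m)})}$ is by construction a convex combination, with the positive weights $\omega_{M+\nu'}^{-1}/\sum_{M+\nu'}\omega_{M+\nu'}^{-1}$, of the genuine theta series $\Theta_{M+\nu'}$ as $M+\nu'$ ranges over the classes in the spinor genus, and all of these have non-negative Fourier coefficients. Consequently, if the $N$-th coefficient of $\Theta_{\spin(L_{(m)}+\nu_{(m)})}$ vanishes, then so does the $N$-th coefficient of every summand, in particular that of $\Theta_m=\Theta_{L_{(m)}+\nu_{(m)}}$, which is the assertion that $N$ is unrepresented by our coset. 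The problem is thereby reduced to exhibiting infinitely many $N\in 3\Z^2$ at which $\Theta_{\spin(L_{(m)}+\nu_{(m)})}$ vanishes.

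Now I would invoke the decomposition $\Theta_{\spin(L_{(m)}+\nu_{(m)})}=\mathcal{E}_m+\mathcal{U}_{\spin(L_{(m)}+\nu_{(m)})}$, which Proposition \ref{prop:SiegelWeil} establishes for $m=14$. The unary theta piece $\mathcal{U}_{\spin}$ is supported on the single square class $3\Z^2$, where its coefficients are a signed, character-twisted sum indexed by $r$, while the Eisenstein coefficient $a_{\mathcal{E}_m}(N)$ factors as a strictly positive product of local densities whenever $N$ is locally represented. The desired vanishing is therefore the exact cancellation $a_{\mathcal{E}_m}(N)+a_{\mathcal{U}_{\spin}}(N)=0$. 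Writing $N=3r^2$, both coefficients become explicit functions of $r$ through their local and character data at the primes $2$ and $3$ and the remaining primes dividing $m-2$; for $m\equiv 2\pmod{12}$ this data depends only on residue classes of $r$ to a fixed modulus, so I would pin $r$ to a congruence class that simultaneously guarantees local admissibility and the sign matching needed for cancellation, and then conclude by Dirichlet that infinitely many such $r$ occur, yielding infinitely many $N$ and hence infinitely many $n\in\mathcal{S}_{m,3}^{\operatorname{e}}$. The finite computation of Proposition \ref{prop:SiegelWeil} for $m=14$ serves as the template here, the point being that the only finite anisotropic prime of the underlying space is $2$ and that the support and leading behaviour of $\mathcal{U}_{\spin}$, being governed by the $2$-adic and $3$-adic structure, are determined by $m\bmod 12$ alone.

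The main obstacle is the passage from $m=14$ to general $m\equiv 2\pmod{12}$: Conjecture \ref{conj:SiegelWeil} is only known as Proposition \ref{prop:SiegelWeil} at $m=14$, so for general $m$ I cannot simply quote the spinor Siegel--Weil identity. The crux is to show that the feature actually used---that the spinor genus average minus $\mathcal{E}_m$ is a combination of unary theta functions supported on $3\Z^2$, with no cuspidal contribution that could obstruct the cancellation---transfers uniformly across the congruence class $m\equiv 2\pmod{12}$, reducing the general case to the single verification carried out for $m=14$. Keeping this uniform while the number of bad primes dividing $m-2$ grows with $m$ is the delicate point; I expect it to be controlled by bounding $\ord_p(N)$ at each such prime and absorbing those restrictions into the congruence class chosen for $r$, so that the local density product and the unary theta coefficient retain the same matched form as in the $m=14$ case.
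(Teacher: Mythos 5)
Your $m=14$ mechanism is exactly the paper's: positivity of the weighted spinor-genus average, the decomposition certified by Proposition \ref{prop:SiegelWeil}, Eisenstein coefficients evaluated via Hurwitz class numbers, and exact cancellation against $\vartheta_{1,3,12}$ (whose $3n^2$-th coefficient is $\left(\tfrac{-4}{n}\right)n$) at primes $\ell\equiv 1\pmod{12}$ for $L+\nu$ and $\ell\equiv 7\pmod{12}$ for $L+5\nu$. The genuine gap is in the passage to general $m\equiv 2\pmod{12}$: you propose to run the same argument on the coset $L_{(m)}+\nu_{(m)}$ itself, which requires the identity $\Theta_{\spin(L_{(m)}+\nu_{(m)})}=\mathcal{E}_m+\mathcal{U}_{\spin(L_{(m)}+\nu_{(m)})}$ --- but that is precisely Conjecture \ref{conj:SiegelWeil}, known only for $m=14$, and the paper's own remark notes that proving it in general would require new algebraic theory of (primitive) spinor exceptions for lattice cosets. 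Your claim that the needed feature ``transfers uniformly'' across the congruence class is an assertion, not an argument: nothing in the finite $m=14$ verification controls the cuspidal part of $\Theta_{\spin(L_{(m)}+\nu_{(m)})}$ for other $m$, and even granting the conjecture you would still need $\mathcal{U}_{\spin}\neq 0$ together with an exact coefficient computation for each $m$. A further slip: the clean evaluation \eqref{eqn:Heval} giving $\ell/8$ holds for $\ell$ \emph{prime}; for composite values in your congruence class the Hurwitz class number has extra terms and the cancellation fails, so Dirichlet must be applied to primes, not to arbitrary $r$ in a progression.

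The paper sidesteps the conjecture entirely by an elementary coarsening step that your proposal is missing. Writing $m=12r+2$ and completing the square as in \eqref{eqn:badident}, any representation $P_m(x,y,z)=n$ yields $X=12rx+6r-1$, $Y=12ry+6r-1$, $Z=12rz+6r-1$, so in particular $X\equiv Y\equiv Z\equiv 6r-1\pmod{12}$; up to replacing $(X,Y,Z)$ by $(-X,-Y,-Z)$ this puts all three variables in the class $5\pmod{12}$ ($r$ odd) or $1\pmod{12}$ ($r$ even) --- exactly the congruence classes cut out by the $m=14$ cosets $L+\nu$ and $L+5\nu$. Hence the single verified instance, packaged as Theorem \ref{thm:XYZmod12}, forbids $X^2+Y^2+Z^2=3\ell^2$ for primes $\ell\equiv 1$, resp.\ $7\pmod{12}$, \emph{simultaneously for every} $m\equiv 2\pmod{12}$; no spinor-genus statement about $L_{(m)}+\nu_{(m)}$ is ever invoked, and the growing set of primes dividing $m-2$ that worries you never enters. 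What remains is only arithmetic: producing infinitely many primes $\ell$ in the prescribed class with $3\ell^2\equiv 3(6r-1)^2\pmod{24r}$, which the paper does by the Chinese Remainder Theorem, Hensel's lemma at $2$, and Dirichlet's theorem. Without this reduction (or a proof of Conjecture \ref{conj:SiegelWeil} for all cosets in the family), your argument establishes the theorem only for $m=14$.
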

\begin{remarks}
\noindent

\noindent
\begin{enumerate}[leftmargin=*,label={\rm(\arabic*)}]
\item
For any given lattice coset $L+\nu$, one can check Conjecture \ref{conj:SiegelWeil} with a (possibly long) finite calculation. To show that Conjecture \ref{conj:SiegelWeil} is true for all lattice cosets, one would need to develop the algebraic theory further to determine spinor exceptions (resp. primitive spinor exceptions) for lattice cosets, proving a theorem analogous to Schulze-Pillot's results in \cite{SP80} (resp. Earnest, Hsia, and Hung's results in \cite{EH94}).
\item
It is natural to ask whether one expects the forms $P_m$ to be almost universal in the case that $m\equiv 2\pmod{3}$ is odd.  Guy showed in \cite{Guy} that $P_5$ is not only almost universal, but indeed universal.  Computer calculations indicate that $P_{11}$ is also almost universal.  In order to prove that any given $P_m$ in this family is almost universal, it suffices to decompose the associated theta function into an Eisenstein series, a linear combination of unary theta functions, and a cusp form which is orthogonal to unary theta functions.  If the contribution from unary theta functions is trivial, then form will be almost universal.  Following Conjecture \ref{conj:SiegelWeil}, one expects the unary theta function contribution to directly appear from the theta function associated to the spinor genus.  
\end{enumerate}
\end{remarks}

The paper is organized as follows.  We first give some preliminary definitions and known results in Section \ref{sec:prelim}.  In Section \ref{sec:algebraic}, we use algebraic methods to establish the local behavior of $P_m$.  In Section \ref{sec:analytic}, we give a proof Theorem \ref{thm:main} using analytic methods.  We then finally blend the two approaches together in Section \ref{sec:link} in order to prove Proposition \ref{prop:SiegelWeil} and Theorem \ref{thm:P14}.
\section*{Acknowledgements}

The authors thank Wai Kiu Chan for helpful discussion and Rainer Schulze-Pillot for pointing out the work of van der Blij \cite{vanderBlij} and Kneser \cite{Kneser2} related to the Siegel--Weil formula for shifted lattices after seeing a preliminary version of this paper.  The authors also thank the anonymous referee for many corrections and also suggestions that improved the exposition of the paper. 

\section{Preliminaries}\label{sec:prelim}
In this section, we introduce the necessary objects used in the algebraic proofs.

\subsection{Setup for the algebraic approach:  Lattice theory}\label{sec:prelimlattice}

For the algebraic approach, we adopt the language of quadratic spaces and lattices as set forth in \cite{OM}.  If $L$ is a lattice and $A$ is the Gram matrix for $L$ with respect to some basis, we write $L\cong A$.  When $A$ is a diagonal matrix with entries $a_1,...,a_n$ on the diagonal, then $A$ is written as $\lan a_1,...,a_n\ran$.  For a lattice $L$, we let $V$ denote the underlying quadratic space; that is, $V=\Q L$. In this case, we say that $L$ is a lattice on the quadratic space $V$.  For a lattice $L$ we define the localization of $L$ by $L_p=L\otimes_\Z\Z_p$, where now $L_p$ is a $\Z_p$-lattice on $V_p:=V\otimes_{\Q} \Q_p$.

Given a lattice $L$ and a vector $\nu\in V$, we have the lattice coset $L+\nu$.  If we define a lattice $M=L+\Z\nu$, then $L+\nu$ can be regarded as coset inside the lattice quotient $M/L$. Elements in $L+\nu$ are simply vectors of the form $\nu+x$, where $x\in L$.

We are considering representations of an integer $n$ by the sum $P_m$, which upon completing the square, is seen to be equivalent to the condition that 
\[
\ell_n:=\begin{cases}
3\left(\frac{(m-4)}{2}\right)^2+2(m-2)n & \text{ for $m$ even}\\
3\left(m-4\right)^2+8(m-2)n & \text{ for $m$ odd}
\end{cases}
\] 
is represented by the lattice coset $L+\nu$, where $L=L_{(m)}$ is defined as the $\Z$-lattice 
\begin{equation}\label{eqn:Ldef}
\begin{cases}
\lan(m-2)^2,(m-2)^2,(m-2)^2\ran & \text{ for $m$ even}\\
\lan4(m-2)^2,4(m-2)^2,4(m-2)^2\ran & \text{ for $m$ odd},\\
\end{cases}
\end{equation}
 in the orthogonal basis $\{e_1,e_2,e_3\}$, and 
\begin{equation}\label{eqn:nudef}
\nu=\nu_{(m)}:=
\frac{(m-4)}{2(m-2)}\!\left(e_1+e_2+e_3\right). 
\end{equation}
To prove Theorem \ref{thm:main}, we need to show that all but finitely many $\ell_n$ are represented by the lattice coset $L+\nu$.  

In order to approach this problem from the algebraic side, we need to develop some algebraic notion of the class, spinor genus, and genus of a lattice coset.  Following the definitions that originally appear in \cite{CO13} the {\em class} of $L+\nu$ is defined as 
\begin{equation}\label{eqn:clscosetdef}
\cls(L+\nu):=\text{the orbit of $L+\nu$ under the action of $SO(V)$,}
\end{equation}
the {\em spinor genus} of $L+\nu$ as 
\begin{equation}\label{eqn:spncosetdef}
\spin(L+\nu):=\text{the orbit of $L+\nu$ under the action of $SO(V)O'_\mathbb{A}(V)$,}
\end{equation}
and the {\em genus} of the lattice coset $L+\nu$ by 
\begin{equation}\label{eqn:gencosetdef}
\genus(L+\nu):=\text{ the orbit of $L+\nu$ under the action of $SO_\mathbb{A}(V)$,}
\end{equation}
and where $O'_\mathbb{A}(V)$ denotes the adeles of the kernel of the spinor norm map, $\theta:SO(V)\rightarrow \mathbb Q^\times/{\mathbb Q^\times}^2$ as defined in \cite[\S 55]{OM}. Note that what we refer to as the genus (resp. spinor genus or  class) above is often called the \begin{it}proper genus\end{it} (resp. proper spinor genus or proper class), and is commonly denoted with a superscript $+$; e.g., the proper genus is written $\genus^+(L+\nu)$, while the (non-proper) genus (resp. spinor genus and class) are usually defined with the corresponding special orthogonal groups (e.g., $SO_{\mathbb{A}}(V)$) replaced by the orthogonal groups (e.g. $O_{\mathbb{A}}(V)$).  Although the genus and proper genus are always equal in the cases of lattices (see \cite[\S102 A]{OM}) this is not always true for lattice cosets.  In particular, if $O(L_p+\nu)$ does not contain an improper isometry (an element of the orthogonal group with determinant $-1$) at some finite prime $p$, then $\text{gen}^+(L+\nu)\subsetneq\text{gen}(L+\nu)$. For an example of this phenomenon, we direct the reader to \cite[Example 4.5]{CO13}.  In our case we are guaranteed  that $O(L_p+\nu)$ contains a symmetry at every prime $p$.  This is obvious at primes $p$ not dividing $2(m-2)$ since in this case $L_p+\nu=L_p$ is just a diagonal lattice.  At other primes we can take the symmetry $\tau_{e_1-e_2}$, which switches the basis elements $e_1$ and $e_2$ while fixing $e_3$.   Therefore the $\text{gen}(L+\nu)=\text{gen}^+(L+\nu)$.  Since it will not make a difference in this setting, we choose to define the class, genus, and spinor genus above in terms of the special orthogonal group so that our notation matches that given in \cite{Xu}, which will be helpful to us in what follows.  We let $\gen(L+\nu)$ (resp. $\spn(L+\nu)$) denote a set of representatives of the classes in $\genus(L+\nu)$ (resp. $\spin(L+\nu)$). For any further unexplained notation, the reader is directed to \cite{OM}.

The general strategy will be to show first that there are no local obstructions, i.e. that $Q(\nu)+2(m-2)n$ is represented by the $\genus(L+\nu)$.  Next we will determine conditions under which the spinor genus and genus coincide.  An essential ingredient here will be to count the number of spinor genera in the genus of a lattice coset.  For this we turn to a formula given by Xu in \cite{Xu}, counting the number of spinor genera in $\genus(L+\nu)$,  
\begin{eqnarray}\label{spincount}
[J_\mathbb{Q}:\Q^\times\prod_{p\in \Omega}\theta(SO(L_p+\nu))]
\end{eqnarray}
where $J_\Q$ is the set of ideles of $\Q$ and $\Omega$ is the set of primes in $\Q$ and $SO(L_p+\nu)$ is the stabilizer of $L_p+\nu$ in $SO(V_p)$. One easily checks that (cf. \cite{CO13}) 
\begin{equation}\label{orthog}
SO(L_p+\nu)=\{\sigma\in SO(V_p):\sigma(L_p)=L_p \text{ and }\sigma(\nu)\equiv \nu\mod L_p\}.
\end{equation}
In Theorem \ref{prop:spngen} we will explicitly compute the image of $SO(L_p+\nu)$ under the spinor norm map and count the number of spinor genera. 

\subsection{Setup for the analytic approach:  Modular forms theory}
We require some results about (classical holomorphic) modular forms.  
\subsubsection{Basic definitions}
Let $\H$ denote the \begin{it}upper half-plane\end{it}, i.e., those $\tau=u+iv\in \C$ with $u\in\R$ and $v>0$.  The matrices $\gamma=\left(\begin{smallmatrix} a&b\\ c&d\end{smallmatrix}\right)\in\SL_2(\Z)$ (the space of two-by-two integral matrices with determinant $1$) act on $\H$ via \begin{it}fractional linear transformations\end{it} $\gamma \tau:=\frac{a\tau+b}{c\tau+d}$.  For 
\[
j(\gamma,\tau):=c\tau+d,
\]
a \begin{it}multiplier system\end{it} for a subgroup $\Gamma\subseteq \SL_2(\Z)$ and \begin{it}weight\end{it} $r\in \R$ is a function $\nu:\Gamma\mapsto \C$ such that for all $\gamma,M\in\Gamma$ (cf. \cite[(2a.4)]{Pe1})
\[
\nu(M \gamma) j(M\gamma,\tau)^r = \nu(M)j(M,\gamma \tau)^r \nu(\gamma)j(\gamma,\tau)^r.
\]
The \begin{it}slash operator\end{it} $|_{r,\nu}$ of weight $r$ and multiplier system $\nu$ is then 
\[
f|_{r,\nu}\gamma (\tau):=\nu(\gamma)^{-1} j(\gamma,\tau)^{-r} f(\gamma \tau).
\]
A \begin{it}(holomorphic) modular form\end{it} of weight $r\in\R$ and multiplier system $\nu$ for $\Gamma$  is a function $f:\H\to\C$ satisfying the following criteria:
\noindent

\noindent
\begin{enumerate}[leftmargin=*]
\item
The function $f$ is holomorphic on $\H$.
\item
For every $\gamma\in\Gamma$, we have 
\begin{equation}\label{eqn:modularity}
f|_{r,\nu}\gamma= f.
\end{equation}
\item
The function $f$ is bounded towards every \begin{it}cusp\end{it} (i.e., those elements of $\Gamma\backslash(\Q\cup\{i\infty\})$).  This means that at each cusp $\varrho$ of $\Gamma\backslash \H$, the function $f_{\varrho}(\tau):=f|_{r,\nu}\gamma_{\varrho}(\tau)$ is bounded as $v\to \infty$, where $\gamma_{\varrho}\in \SL_2(\Z)$ sends $i\infty$ to $\varrho$.  
\end{enumerate}
Furthermore, if $f$ vanishes at every cusp (i.e., $\lim_{\tau\to i\infty} f_{\varrho}(\tau)=0$), then we call $f$ a \begin{it}cusp form\end{it}.  

\subsubsection{Half-integral weight forms}
We are particularly interested in the case where $r=k+1/2$ with $k\in\N_0$ and 
\[
\Gamma=\Gamma_1(M):=\left\{ \left(\begin{matrix}a&b\\ c&d\end{matrix}\right)\in\SL_2(\Z): M\mid c, a\equiv d\equiv 1\pmod{M}\right\}
\]
for some $M\in\N$ divisible by $4$.  The multiplier system we are particularly interested in is given in \cite[Proposition 2.1]{Shimura}, although we do not need the explicit form of the multiplier for this paper.

If $T^N\in \Gamma$ with $T:=\left(\begin{smallmatrix} 1&1\\ 0 &1\end{smallmatrix}\right)$, then by \eqref{eqn:modularity} we have $f(\tau+N)=f(\tau)$, and hence $f$ has a Fourier expansion ($a_{f}(n)\in\C$)
\begin{equation}\label{eqn:fexp}
f(\tau)=\sum_{n\geq 0} a_{f}(n) e^{\frac{2\pi i n \tau}{N}}.
\end{equation}
The restriction $n\geq 0$ follows from the fact that $f$ is bounded as $\tau\to i\infty$.  One commonly sets $q:=e^{2\pi i \tau}$ and associates the above expansion with the corresponding formal power series, using them interchangeably unless explicit analytic properties of the function $f$ are required.

\subsubsection{Theta functions for quadratic polynomials}
In \cite[(2.0)]{Shimura}, Shimura defined theta functions associated to lattice cosets $L+\nu$ (for a lattice $L$ of rank $n$) and polynomials $P$ on lattice points.  Namely, he defined
\[
\Theta_{L+\nu,P}(\tau):=\sum_{\boldsymbol{x}\in L+\nu} P(\boldsymbol{x}) q^{Q(\boldsymbol{x})},
\]
where $Q$ is the quadratic map on the associated quadratic space.  We omit $P$ when it is trivial.  In this case, we may write $r_{L+\nu}(\ell)$ for the number of elements in $L+\nu$ of norm $\ell$ and we get
\begin{equation}\label{eqn:thetadef}
\Theta_{L+\nu}(\tau)=\sum_{\ell\geq 0} r_{L+\nu}(\ell) q^{\ell}.
\end{equation}
Shimura then showed (see \cite[Proposition 2.1]{Shimura}) that $\Theta_{L+\nu}$ is a modular form of weight $n/2$ for $\Gamma_1(4N^2)$ (for some $N$ which depends on $L$ and $\nu$) and a particular multiplier. Note that we have taken $\tau\mapsto 2N\tau$ in Shimura's definition.  To show the modularity properties, for $\gamma=\left(\begin{smallmatrix}a&b\\ c&d\end{smallmatrix}\right)\in \Gamma_1(4N^2)$, we compute
\[
2N\gamma(\tau)=2N\frac{a\tau+b}{c\tau+d} = \frac{a(2N\tau) + 2Nb}{\frac{c}{2N} (2N\tau)+d} = \left(\begin{matrix} a & 2Nb\\ \frac{c}{2N} & d\end{matrix}\right) (2N\tau).
\]
Since $\gamma\in \Gamma_{1}(4N^2)$, we have 
\[
\left(\begin{matrix} a & 2Nb\\ \frac{c}{2N} & d\end{matrix}\right)\in \Gamma(2N):=\left\{ \gamma= \left(\begin{matrix}a&b\\ c&d\end{matrix}\right)\in\SL_2(\Z): \gamma\equiv I_{2}\pmod {N}\right\}\subset \Gamma_{1}(2N),
\]
so we may then use \cite[Proposition 2.1]{Shimura}.  Specifically, the multiplier is the same multiplier as $\Theta^3$, where $\Theta(\tau):=\sum_{n\in\Z} q^{n^2}$ is the classical Jacobi theta function.  

We only require the associated polynomial in one case.  Namely, for $n=1$ and $P(x)=x$, we require the \begin{it}unary theta functions\end{it}  (see \cite[(2.0)]{Shimura} with $N\mapsto 2N^2/t$, $P(m)=m$, $A=(2N^2/t)$, and $h\mapsto 2Nh$, multiplied by $(2N)^{-1}$)
\begin{equation}\label{eqn:unarydef}
\vartheta_{h,t}(\tau)=\vartheta_{h,t,N}(\tau):=\sum_{\substack{r\in\Z\\ r\equiv h\pmod{\frac{N}{t}}}} r q^{t r^2},
\end{equation}
where $h$ may be chosen modulo $N/t$ and $t$ is a squarefree divisor of $N$.  These are weight $3/2$ modular forms on $\Gamma_1(4N^2)$ the same multliplier system as $\Theta_{L+\nu}$.

\section{Algebraic Approach}\label{sec:algebraic}

As seen in Section \ref{sec:prelimlattice}, a natural number $n$ is represented by $P_m(x,y,z)$ if and only if $\ell_n$ is represented by the lattice coset $L+\nu$.  In this section, we check for local obstructions; i.e., we check whether $\ell_n$ may be represented by $L_p+\nu$ for every prime $p$.

\begin{lemma}\label{lem:local}
If $m\equiv 0\pmod 4$ then $P_m$ is not almost universal.  
\end{lemma}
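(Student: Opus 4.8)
The plan is to exhibit an explicit congruence class that $P_m$ fails to represent modulo $m$ (or a small divisor thereof) whenever $4 \mid m$, thereby producing a local obstruction at the prime $2$. First I would reduce to a congruence computation. Since $p_m(x) = \frac{(m-2)x^2 - (m-4)x}{2}$, I want to understand the possible residues of $p_m(x)$ modulo a suitable power of $2$ dividing $m$. Writing $m = 4k$, the linear coefficient $\frac{m-4}{2} = 2k-2$ and the quadratic coefficient $\frac{m-2}{2}$ requires care since $m-2 \equiv 2 \pmod 4$ is only $2$-divisible once; so I would work modulo $4$ (or modulo $8$) and tabulate $p_m(x) \bmod 4$ as $x$ ranges over residues. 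The key point is that $p_m(x) = \frac{(m-2)x^2 - (m-4)x}{2}$ simplifies modulo a fixed small modulus to something like $x^2 - x \cdot(\text{something}) \pmod{\text{small}}$, depending only on $m \bmod 8$, so the image of $p_m$ is a restricted subset of residues.

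The central step is to show that the image of a single $p_m(x)$ modulo $4$ omits enough residues that no sum of three of them can hit a particular residue class. Concretely, I expect that modulo $4$ the values $p_m(x)$ land only in $\{0,1\}$ (or some two-element set), because $p_m(x) = \frac{(m-2)}{2}x^2 - \frac{(m-4)}{2}x$ and with $4 \mid m$ the half-integer coefficients collapse: $\frac{m-2}{2}$ is odd while $\frac{m-4}{2}$ is even, giving $p_m(x) \equiv \frac{m-2}{2}\,x^2 \equiv x^2 \pmod 2$ after checking parity, and a sharper analysis modulo $4$ pins down the image to two classes. Once I know the image $I := \{p_m(x) \bmod 4\}$, I would compute the sumset $I + I + I \pmod 4$ and verify it is a proper subset of $\mathbb{Z}/4\mathbb{Z}$. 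Any residue $B \notin I+I+I$ then gives $B + 4\N_0 \subseteq \mathcal S_m$, making $\mathcal S_m$ infinite, so $P_m$ is not almost universal.

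Translating back to the lattice language from Section \ref{sec:prelimlattice} gives an equivalent and perhaps cleaner route: representing $n$ by $P_m$ is equivalent to representing $\ell_n$ by $L_{(m)} + \nu_{(m)}$, and a local obstruction at $2$ means $\ell_n$ is not represented by $(L_{(m)} + \nu_{(m)})_2 = L_2 + \nu$ for an infinite arithmetic progression of $n$. I would localize at $p=2$, observe that the diagonal entries of $L_{(m)}$ are $(m-2)^2$ (for $m$ even), which are $\equiv 4 \pmod{16}$ when $4 \mid m$, and analyze which values $Q(\nu + x)$ with $x \in L_2$ can take modulo a suitable $2$-power; the shift $\nu = \frac{m-4}{2(m-2)}(e_1+e_2+e_3)$ interacts with this to exclude a $2$-adic square class. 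Either computation confirms the obstruction.

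The main obstacle is bookkeeping the half-integer coefficients correctly: because $p_m$ has a denominator of $2$, one must be careful whether to compute modulo $4$ or modulo $8$ and to treat the subcases $m \equiv 0 \pmod 8$ versus $m \equiv 4 \pmod 8$ separately, since these may yield different images $I$ and hence different excluded classes $B$. I would verify the sumset computation in each subcase to ensure a genuine obstruction exists uniformly for all $m \equiv 0 \pmod 4$. A secondary subtlety is confirming that the excluded residues $B$ actually correspond to infinitely many natural numbers $n$ (not just to $\ell_n$), which follows because $\ell_n$ is an invertible affine function of $n$ modulo the relevant $2$-power, so an omitted class for $\ell_n$ pulls back to an omitted congruence class for $n$.
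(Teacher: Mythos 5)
Your overall strategy---an explicit congruence obstruction at $2$ killing an entire arithmetic progression of $n$---is exactly the paper's proof, which (apart from citing \cite{H14} for the subfamily $m=2p+2$ with $p$ an odd prime, a case your computation covers anyway) simply exhibits omitted residue classes modulo $8$. However, the central step as you wrote it fails at the modulus you propose: modulo $4$ there is \emph{no} obstruction. Writing $m=4k$, one has $p_m(x)=(2k-1)x^2-2(k-1)x$, and modulo $4$ the image is $\{0,1\}$ in both subcases, whose triple sumset is all of $\Z/4\Z$. The obstruction appears only modulo $8$. There, since $x^2-x$ is always even, the term $4j(x^2-x)$ arising in both expansions vanishes modulo $8$, and one finds: for $m\equiv 4\pmod{8}$ (so $\frac{m-2}{2}\equiv 1$ and $\frac{m-4}{2}\equiv 0\pmod 4$) that $p_m(x)\equiv x^2\pmod 8$, with image $\{0,1,4\}$, and sums of three such values never hit $7\equiv -1\pmod 8$; for $m\equiv 0\pmod 8$ (so $\frac{m-2}{2}\equiv 3$ and $\frac{m-4}{2}\equiv 2\pmod 4$) that $p_m(x)\equiv 2x-x^2=1-(x-1)^2\pmod 8$, with image $\{0,1,5\}$, and triple sums never hit $4\pmod 8$. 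These are precisely the omitted classes $-1$ and $4$ modulo $8$ asserted in the paper's proof. Since you explicitly flagged modulo $8$ as a fallback and built in a verification step for each subcase, this is a recoverable miscalculation rather than a wrong approach; but as literally stated, the verification that $I+I+I$ is a proper subset of $\Z/4\Z$ would come back negative, and the proof would not close at modulus $4$.

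One further small correction to your final paragraph: $\ell_n=2(m-2)n+3\left(\frac{m-4}{2}\right)^2$ is \emph{not} an invertible affine function of $n$ modulo powers of $2$, since $\ord_2(2(m-2))=2$ when $4\mid m$, so the pullback argument as phrased does not stand. It is also unnecessary: working directly with $P_m(x,y,z)\bmod 8$, as in your first route, immediately shows that every $n$ in the omitted class modulo $8$ lies in $\mathcal{S}_m$, so $\mathcal{S}_m$ contains a full arithmetic progression and $P_m$ is not almost universal. Likewise the lattice-coset detour through $(L_{(m)}+\nu_{(m)})_2$ is sound in principle but adds nothing here over the two-line tabulation modulo $8$.
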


\begin{proof}
When $m=2p+2$ for an odd prime $p$, then the claim follows immediately from \cite[Theorem 7]{H14}.  Otherwise it can be easily verified that if $m\equiv 0\pmod 4$ then $P_m(x,y,z)$ always fails to represent an entire square class modulo 8, and is therefore not almost universal. Specifically, if $m\equiv 4\pmod{8}$, then $P_m(x,y,z)$ does not represent any integer congruent to $-1$ modulo $8$, while if $m\equiv 0\pmod{8}$, then $P_m(x,y,z)$ does not represent any integer congruent to $4$ modulo $8$. 

\end{proof}

%In what follows we will consider the case where $m\equiv 2\pmod 4$.  Note that in this case $Q(\nu)$ is odd and therefore $\ell_n$ is always odd.  Moreover, we note that in this case $(m-2)$ and $\frac{(m-4)}{2}$ are relatively prime.  
 
In order for $P_m$ to be almost universal, a necessary condition is that every integer $\ell_n$ is represented by $\genus(L+\nu)$.  Since it will be helpful in much of what follows, we define the ternary lattice $M:=L+\Z\nu$ and note that this lattice has a basis $\{\nu,e_1,e_2\}$.   We will also define $T:=\{p\text{ prime }:p\mid (m-2)\}$.

%\begin{lemma}\label{iso}
%If $L\cong\lan(m-2)^2,(m-2)^2,(m-2)^2\ran$, then $L_p$ is isotropic at every odd prime $p$ and %consequently $L_2$ is anisotropic. 
%\end{lemma}

%\begin{proof}
%Since the Hasse invariant of $\Q_p L$ is equal to the Hilbert symbol $(-1,-1)_p$ for every odd prime $p$, it follows that $L_p$ is isotropic.  Now is is an immediate consequence of Hilbert Reciprocity that $L_2$ is anisotropic. 
%\end{proof}

\begin{lemma}\label{lem:Lp}
For any odd prime $p\notin T$, we have $M_p=L_p=L_p+\nu$.
\end{lemma}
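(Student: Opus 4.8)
The plan is to show that for an odd prime $p \notin T$, the shift vector $\nu$ already lies in the localized lattice $L_p$, so that the coset $L_p + \nu$ collapses to $L_p$ itself and the intermediate lattice $M_p = L_p + \Z_p \nu$ gains nothing beyond $L_p$. The key arithmetic fact is that $p \notin T$ means $p \nmid (m-2)$, and since $p$ is odd we also have $p \nmid 2(m-2)$. The denominator appearing in $\nu = \frac{m-4}{2(m-2)}(e_1+e_2+e_3)$ from \eqref{eqn:nudef} is therefore a $p$-adic unit, which is exactly the mechanism that forces $\nu$ into $L_p$.

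First I would pass to the localization at $p$ and examine the coefficient $c := \frac{m-4}{2(m-2)}$ so that $\nu = c(e_1+e_2+e_3)$. Because $p$ is odd and $p \nmid (m-2)$, the integer $2(m-2)$ is a unit in $\Z_p$, and hence $c \in \Z_p$. Consequently each coordinate of $\nu$ in the orthogonal basis $\{e_1,e_2,e_3\}$ is a $p$-adic integer, which gives $\nu \in L_p$ directly from the definition of $L_p = L \otimes_\Z \Z_p$ (note that scaling $L$ by $(m-2)^2$ or $4(m-2)^2$ in \eqref{eqn:Ldef} does not change $L_p$ up to a unit square factor, and in any case $\Z_p e_i \subseteq L_p$ for each $i$ once the diagonal entries are units). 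From $\nu \in L_p$ I would conclude $L_p + \nu = L_p$, since adding a lattice element to a coset returns the lattice, and also $M_p = L_p + \Z_p \nu = L_p$ because the new generator $\nu$ was already present. This chains the three claimed equalities $M_p = L_p = L_p + \nu$.

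I expect the only genuine point requiring care to be the bookkeeping around the scaling factor in the definition of $L$: the diagonal entries are $(m-2)^2$ (for $m$ even) or $4(m-2)^2$ (for $m$ odd), so the basis vectors $e_i$ have norm divisible by these factors, and one must confirm that $L_p$ nonetheless contains $\Z_p e_i$ as a $\Z_p$-module so that the $p$-adically integral combination $c(e_1+e_2+e_3)$ genuinely lands in $L_p$. This is immediate once one recalls that $L_p$ is the $\Z_p$-span of $\{e_1,e_2,e_3\}$ (the scaling affects the quadratic form, not the underlying module), so the containment is purely a statement about $c \in \Z_p$. The remainder is routine.
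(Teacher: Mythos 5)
Your proposal is correct and matches the paper's proof, which consists of the single observation that $\nu\in L_p$; you have merely supplied the (straightforward) reason, namely that $2(m-2)$ is a unit in $\Z_p$ for odd $p\notin T$, so the coefficient $\frac{m-4}{2(m-2)}$ lies in $\Z_p$ and hence $\nu\in L_p=\Z_p e_1+\Z_p e_2+\Z_p e_3$. Your care about the diagonal scaling is well placed but resolved exactly as you say: the scaling changes the quadratic form, not the underlying $\Z_p$-module.
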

\begin{proof}
This follows immediately from the fact that $\nu\in L_p$.  
\end{proof}
\begin{lemma}\label{genrep1}
If $m\not\equiv 0\pmod{4}$, then $\ell_n$ is represented by $L_p+\nu$ for every prime $p$.
\end{lemma}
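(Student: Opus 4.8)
The plan is to rewrite representability of $\ell_n$ by $L_p+\nu$ as an explicit $p$-adic sum-of-three-squares problem and then treat the primes in three groups. Computing the quadratic map on the coset directly, a point of $L_p+\nu$ has norm $w_1^2+w_2^2+w_3^2$ where, for $m$ even, $w_i\in b+(m-2)\Z_p$ with $b:=\tfrac{m-4}{2}$, and for $m$ odd, $w_i\in b+2(m-2)\Z_p$ with $b:=m-4$. Since the constant shift contributes exactly $3b^2$ and $\ell_n$ equals $3b^2$ plus the term involving $n$, representing $\ell_n$ by $L_p+\nu$ is equivalent to solving $w_1^2+w_2^2+w_3^2=\ell_n$ with each $w_i$ in the prescribed residue class modulo $(m-2)$ (resp. $2(m-2)$). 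I would record this reformulation first, as it makes the role of each prime transparent.

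For odd primes $p\notin T$ I would invoke Lemma~\ref{lem:Lp}: there $L_p+\nu=L_p$ is a unimodular ternary $\Z_p$-lattice (as $p\nmid(m-2)$), which represents every element of $\Z_p$ for odd $p$ (see \cite{OM}), so $\ell_n$ is represented with no further work. For odd primes $p\in T$, I would substitute $w_i=b+(m-2)x_i$ (resp. $b+2(m-2)x_i$), cancel the common $3b^2$, divide out the resulting common factor, and set $x_2=x_3=0$ to reduce to a single quadratic equation $f(x_1)=0$ over $\Z_p$. The decisive point is that $b\equiv -1$ (resp. $m-4\equiv -2$) modulo $p\mid(m-2)$, so $f'$ is a $p$-adic unit at the approximate root $x_1\equiv -n\pmod p$; Hensel's lemma then lifts this to a genuine solution in $\Z_p$. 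This disposes of all odd primes uniformly, $p=3$ included.

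The crux is $p=2$, which is precisely where the hypothesis $m\not\equiv 0\pmod 4$ is used (complementing the local obstruction of Lemma~\ref{lem:local}). In both surviving cases I would first note that every admissible $w_i$ is forced odd and that $\ell_n\equiv 3\pmod 8$, so the target is consistent with---rather than obstructed by---a sum of three odd squares. When $m$ is odd the coset $b+2(m-2)\Z_2$ is all odd $2$-adic integers, so I can take $w_2=w_3=1$ and solve $w_1^2=\ell_n-2$; as $\ell_n-2\equiv 1\pmod 8$ this has an (odd) $2$-adic square root. When $m\equiv 2\pmod 4$ the coset is the narrower class $b+2^k\Z_2$ with $k=\ord_2(m-2)\ge 2$, so I would instead rerun the single-variable reduction: dividing by $2(m-2)$ turns the equation into $bx_1+\tfrac{m-2}{2}x_1^2=n$, whose derivative is the odd number $b$ modulo $2$, and Hensel (with approximate root $x_1\equiv n\pmod 2$) again produces a solution.

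I expect the $p=2$ analysis to be the main obstacle, both because it is the only place the hypothesis enters and because the residue-class constraint on the $w_i$ is most restrictive there. The care required lies in tracking $\ord_2(m-2)$ and in checking that the forced congruence $\ell_n\equiv 3\pmod 8$ aligns with the sum-of-three-odd-squares condition; this is the exact arithmetic that fails once $4\mid m$, matching Lemma~\ref{lem:local}.
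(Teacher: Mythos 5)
Your proof is correct, and at the primes dividing $m-2$ it takes a genuinely different route from the paper. The opening reduction to $w_1^2+w_2^2+w_3^2=\ell_n$ with each $w_i$ in the class of $b$ modulo $m-2$ (resp.\ $2(m-2)$) is exactly the paper's setup, and for odd $p\nmid m-2$ the two arguments coincide (Lemma \ref{lem:Lp} plus unimodularity). The divergence is at the odd primes $p\mid m-2$: the paper shows that $\ell_n$ is a unit in $\Z_p$, invokes the local square theorem to get representation by $M_p=L+\Z\nu$ localized at $p$, and then classifies which cosets $L_p+t\nu$ can represent a unit in the square class of $Q(\nu)$, forcing $t\equiv\pm1$; you instead set two variables to zero and Hensel-lift the one-variable equation $bx_1+\tfrac{m-2}{2}x_1^2=n$ (resp.\ $bx_1+(m-2)x_1^2=2n$), i.e.\ you prove the stronger statement that every $n\in\Z_p$ is itself a $p$-adic $m$-gonal number --- which is precisely the trick the paper reserves for $p=2$, extended uniformly to all $p\in T$. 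Each approach buys something: the paper's coset analysis is structural and previews the splitting $M_p=\Z_p[\nu]\perp K_p$ exploited in Proposition \ref{prop:spngen}, while your Hensel argument is more elementary and, notably, covers $p=3$ when $3\mid m-2$: the paper's written proof treats only ``odd $p\in T$ and $p\neq 3$'' (its local-square-theorem step genuinely fails at $p=3$ since then $\ell_n\equiv 0\pmod 3$ is not a unit) and never returns to that case, even though it occurs exactly when $m\equiv 2\pmod 3$ --- including the paper's central example $m=14$ --- whereas in your reduction the derivative is $b\equiv -1\pmod 3$, a unit, so Hensel applies verbatim. At $p=2$ your argument is the paper's in disguise: the paper solves the completed-square equation explicitly via the local square theorem, which is Hensel's lemma carried out by hand, and your variant for odd $m$ (taking $w_2=w_3=1$ and extracting a $2$-adic square root of $\ell_n-2\equiv 1\pmod 8$) is an equally valid shortcut; your identification of $p=2$ as the unique place where $4\nmid m$ enters matches Lemma \ref{lem:local}.
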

\begin{proof}
For odd $p\notin T$, Lemma \ref{lem:Lp} implies that $L_p+\nu=L_p$ and since $L_p$ is unimodular, it represents every integer in $\Z_p$ (cf. \cite[92:1b]{OM}).

For odd $p\in T$ and $p\neq 3$, $\ell_n$ is a unit in $\Z_p$, since 
 \[
 Q(\nu)=\begin{cases}
 3\left(\frac{m-4}{2}\right)^2 & \text{ when $m-2$ is even}\\
 3(m-4)^2 & \text{ when $m-2$ is odd}
 \end{cases}
 \]
 is never divisible by $p$.   Therefore, since $Q(\nu)$ is represented by $M_p$, it follows from the local square theorem that $\ell_n$ is represented by $M_p$ for every choice of $n$.  Suppose that $\ell_n$ is represented by an arbitrary coset $L_p+t\nu$ of $L_p$ in $M_p$, where $t\in \{0,..,p^k-1\}$.  Then 
\[
Q(\nu)\equiv Q(\omega+t\nu)\equiv t^2Q(\nu)\pmod {p^k}
\]
for $\omega\in L_p$.  Consequently, $t=\pm 1$, since the multiplicative group $(\Z/p^k\Z)^\times$ contains at most one subgroup of order 2.  Therefore $\ell_n$ is represented by the coset $L_p+\nu$.  

Finally, when $p=2$, we will proceed by showing that in fact every integer in $\Z_2$ can be written as an $m$-gonal number when $m-2\equiv 0\mod 4$.    We may suppose that $\ord_2(m-2)=k+1$ where $k>0$.  Therefore, $(m-2)=2^{k+1}\epsilon$ and $(m-4)=2\gamma$ where $\epsilon,\gamma\in \Z_2^\times$.  Then an integer $n$ can be written as an $m$-gonal number precisely when there exists $x\in \Z_2$ such that 
\begin{equation}\label{T2univ}
n=\frac{(m-2)x^2-(m-4)x}{2}=2^{k}\epsilon x^2-\gamma x.
\end{equation}
The $x$ in \eqref{T2univ} (in the algebraic closure of $\Z_2$) is given by 
\begin{equation}\label{solveforx}
x=\frac{\gamma\pm\sqrt{\gamma^2-4(2^k\epsilon)(-n)}}{2^{k+1}\epsilon}
=\frac{1\pm\sqrt{1+2^{k+2}\alpha n}}{2^{k+1}\beta}
\end{equation}
where $\alpha=\epsilon/\gamma^2$ and $\beta=\epsilon/\gamma$.  By the local square theorem, we know that $1+2^{k+2}\alpha n$ is the square of a unit in $\Z_2$. Therefore, 
\[
1+2^{k+2}\alpha n=(1+2^s \delta)^2=1+2^{s+1}\delta+2^{2s}\delta^2=1+2^{s+1}(\delta+2^{s-1}\delta), 
\] 
where $s>0$ and $\delta\in \Z_2^\times$, and 
\[
x=\frac{1\pm \sqrt{1+2^{k+2}\alpha n}}{2^{k+1}\beta}=\frac{1\pm\sqrt{(1+2^s \delta)^2}}{2^{k+1}\beta}=\frac{1\pm(1+2^{s}\delta)}{2^{k+1}\beta}.
\]  
When $s>1$, since $\left| 2^{k+2}\alpha n\right|_2=\left| 2^{s+1}(\delta+2^{s-1}\delta)\right|_2$, it follows that $k+2+r=s+1$ where $r=\ord_2(n)$.  Therefore, 
\[
x=\frac{1-(1+2^s\delta)}{2^{k+1}\beta}=\frac{2^s\delta}{2^{k+1}\beta}=\frac{2^r\delta}{\beta}\in \Z_2,
\]
since $s=k+1+r$.  On the other hand, when $s=1$, then $k+2+r=2+\ord_2(1+\delta)$, and therefore, 
\[
x=\frac{1+\sqrt{(1+2 \delta)^2}}{2^{k+1}\beta}=\frac{1+(1+2\delta)}{2^{k+1}\beta}=\frac{2+2\delta}{2^{k+1}\beta}=\frac{1+\delta}{2^{k}\beta}=\frac{2^r\delta}{\beta}\in \Z_2,
\]
since $k+r=\ord_2(1+\delta)$.  Therefore, since every $2$-adic integer can be expressed as an $m$-gonal number, it follows that every $\ell_n$ is represented by the coset $L_2+\nu$.  

When $m-2$ is odd a similar argument follows, by letting $(m-2)=\epsilon$ and $(m-4)=\gamma$ where $\epsilon,\gamma\in \Z_2^\times$ and then simply replacing equation (\ref{T2univ}) with
\[
2n=(m-2)x^2-(m-4)x= \epsilon x^2-\gamma x.
\]
Hence equation (\ref{solveforx}) becomes
\[
x=\frac{\gamma\pm \sqrt{\gamma^2-4(\epsilon)(-2n)}}{2\epsilon}=\frac{\gamma\pm \sqrt{\gamma^2+8\alpha n}}{2\beta}
\]
where$\alpha =\epsilon/\gamma^2$ and $\beta=\epsilon/\gamma$, and the result follows as above. 

\end{proof}

Having established the local conditions, we next calculate the number of spinor genera for $L+\nu$.  Recall from (\ref{spincount}), the number of spinor genera in the genus of the coset is given by 
\begin{eqnarray*}
[J_\mathbb{Q}:\Q^\times\prod_{p\in \Omega}\theta(SO(L_p+\nu))]
\end{eqnarray*}
where $J_\Q$ is the set of ideles of $\Q$ and $\Omega$ is the set of primes in $\Q$. From this formula, we see that much like in the case of lattices, $\Z_p^\times\subseteq \theta(SO(L_p+\nu))$ for every prime $p$ is sufficient, though certainly not necessary to guarantee that $\genus(L+\nu)$ and $\spin(L+\nu)$ coincide.  

\begin{proposition}\label{prop:spngen}
\noindent
\begin{enumerate}
\item If $m\equiv 2\pmod 4$ and $m\not\equiv 2\pmod {12}$, then $\spin(L+\nu)=\genus(L+\nu)$.
\item For $m\equiv 2\mod 12$, there are two spinor genera in the genus of $L+\nu$.
\end{enumerate}
\end{proposition}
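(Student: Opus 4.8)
The plan is to evaluate the idele-theoretic index \eqref{spincount} by computing the local spinor norm image $\theta(SO(L_p+\nu))$ at every prime via the explicit description \eqref{orthog}. By Lemma \ref{lem:Lp}, at every odd prime $p\notin T$ the coset is the unimodular lattice $L_p$, so $\theta(SO(L_p+\nu))\supseteq\Z_p^\times$ by the standard computation for unimodular lattices (cf. \cite[92:1b]{OM}); the only primes requiring work are therefore $p=2$, $p\in T$, and the prime $3$. The governing dichotomy is whether $3\in T$: since $3\mid(m-2)$ exactly when $m\equiv2\pmod3$, case (1) (where $m\not\equiv2\pmod{12}$, hence $m\not\equiv2\pmod3$) has $3\notin T$, while case (2) ($m\equiv2\pmod{12}$) has $3\in T$. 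I will show that $\theta(SO(L_p+\nu))=\Z_p^\times(\Q_p^\times)^2$ (full units, no uniformizer class) at every bad prime except possibly $3$, and then analyze $p=3$ separately. The crucial reduction is that if $\Z_p^\times\subseteq\theta(SO(L_p+\nu))$ for every $p$, then the index \eqref{spincount} is $1$ (take the trivial idele correction); this will immediately dispose of case (1).

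At the remaining primes I would first Jordan-split the overlattice $M=L+\Z\nu$. At $p=2$ and at every odd $p\in T\setminus\{3\}$ one checks that $Q(\nu)=3\big(\tfrac{m-4}2\big)^2$ is a $p$-adic unit, so $\nu$ spans a unimodular (scale-$p^0$) Jordan component whose orthogonal complement in $M_p$ is a binary lattice of even scale $p^{2\ord_p(m-2)}$. Because $2\nu\notin L_p$, the coset condition $\sigma(\nu)\equiv\nu\bmod L_p$ in \eqref{orthog} rules out $\sigma(\nu)=-\nu$ and forces $\sigma(\nu)=\nu$, so $SO(L_p+\nu)$ is identified with the special orthogonal group of that binary complement. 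For $p\in T\setminus\{3\}$ this binary lattice is unimodular of unit discriminant, whence its $SO$ is either a split torus or the norm-one group of an unramified quadratic extension; in both cases the spinor norm surjects onto $\Z_p^\times/(\Z_p^\times)^2$ but hits no uniformizer. At $p=2$ the space is anisotropic, so $SO(L_2+\nu)$ is compact and its spinor norm image lies automatically in the units; the remaining point is to verify that it exhausts $\Z_2^\times$ modulo squares, which one checks by a direct computation with the $2$-adic Jordan splitting.

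The crux is $p=3$ when $3\in T$. Writing $a=\ord_3(m-2)\ge1$, the identity $Q(\nu)=3\big(\tfrac{m-4}2\big)^2$ gives $\ord_3(Q(\nu))=1$, so $\nu$ now lies in an \emph{odd}-scale component: computing the Gram matrix of $M_3$ in the basis $\{\nu,e_1,e_2\}$ and orthogonalizing against $\nu$ yields $M_3\cong\langle3u_0\rangle\perp\langle3^{2a-1}w_1\rangle\perp\langle3^{2a}w_2\rangle$ with units $u_0,w_1,w_2$. The coset condition again forces $\sigma(\nu)=\nu$, so $SO(L_3+\nu)$ acts on the complementary lattice of scales $3^{2a-1},3^{2a}$ subject to acting trivially on $M_3/L_3\cong\Z/3^a$; the resulting rotations have spinor norm in $3w_1w_2(\Q_3^\times)^2$. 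Hence $\theta(SO(L_3+\nu))=(\Q_3^\times)^2\cup 3w_1w_2(\Q_3^\times)^2$: it misses the nonsquare unit class, so that $\theta(SO(L_3+\nu))\cap\Z_3^\times=(\Z_3^\times)^2$ has index $2$ in $\Z_3^\times$, yet it \emph{does} contain a uniformizer. The subcase $a=1$, where the two lowest components coalesce into a single binary scale-$3$ block containing $\nu$, requires the coset constraint to be handled inside that binary block, and I expect this to be the most delicate computation.

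Finally I would assemble the global count. Since $\Q$ has class number one, $J_\Q=\Q^\times\cdot\big(\R_{>0}\times\prod_p\Z_p^\times\big)$, and \eqref{spincount} equals the index in $\prod_p\Z_p^\times/(\Z_p^\times)^2$ of the subgroup generated by the unit parts of the local images together with the ``cross-terms'' produced by the uniformizer classes appearing in the $\theta(SO(L_p+\nu))$. By the previous paragraphs, in case (1) every local image contains $\Z_p^\times$, so this subgroup is everything and there is a single spinor genus. In case (2) the unit parts cover every coordinate except $p=3$, and the only available uniformizer sits at $p=3$; such a uniformizer contributes only to the coordinates $\ell\neq3$ (which are already covered) and nothing to the $3$-coordinate, while the absence of any uniformizer at $2$ and at the primes of $T\setminus\{3\}$ — forced by anisotropy at $2$ and compactness at the $T$-primes — means the nonsquare unit class at $3$ can never be reached. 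Thus the $3$-adic defect survives, the index is exactly $2$, and $\genus(L+\nu)$ splits into two spinor genera. Beyond the $a=1$ bookkeeping at $3$, the main obstacle is precisely this last step: confirming via Hilbert reciprocity that no uniformizer at the other bad primes can undo the $3$-adic unit defect, so that the two spinor genera are genuinely distinct.
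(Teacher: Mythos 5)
Your overall strategy --- computing $\theta(SO(L_p+\nu))$ locally and feeding it into Xu's index formula \eqref{spincount} --- is the same as the paper's, and your case (1) is essentially sound: there only the lower bound $\Z_p^\times\subseteq\theta(SO(L_p+\nu))$ is needed, which follows (as in the paper) from embedding $SO(K_p)$ into $SO(L_p+\nu)$ by extending isometries of the binary complement by the identity on $\nu$. But note that your recurring reduction ``the coset condition forces $\sigma(\nu)=\nu$'' is an over-claim: $2\nu\notin L_p$ only excludes $\sigma(\nu)=-\nu$, not $\sigma(\nu)=\nu+x$ with $0\neq x\in L_p$ and $Q(\nu+x)=Q(\nu)$, of which there are plenty of candidates by Hensel's lemma. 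The paper proves $\sigma(\nu)=\nu$ only at $p=2$, via a $2$-adic congruence argument, and that exactness $\theta(SO(L_2+\nu))=\Z_2^\times{\Q_2^\times}^2$ is genuinely needed in case (2) --- if $\theta(SO(L_2+\nu))$ contained the class of $2$, then multiplying by the global element $2$ (a nonsquare unit at $3$) would collapse your two cosets and force index $1$. You flag this reciprocity issue yourself as unresolved; it is part of the gap, not bookkeeping.

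The decisive error is at $p=3$. Your Jordan splitting $M_3\cong\lan 3u_0\ran\perp\lan 3^{2a-1}w_1\ran\perp\lan 3^{2a}w_2\ran$ is correct, but there $w_1\equiv w_2\equiv 2$ modulo squares, so your claimed image $(\Q_3^\times)^2\cup 3w_1w_2(\Q_3^\times)^2$ is $(\Q_3^\times)^2\cup 3(\Q_3^\times)^2$, i.e.\ you assert the uniformizer class lies in $\theta(SO(L_3+\nu))$. It does not: the paper shows, via Teterin's theorem that $\theta(SO(M_3/L_3))$ is generated by spinor norms of symmetries $\tau_\omega\in O(M_3/L_3)$ together with an explicit classification of the admissible $\omega$ (the conditions $x_1^2+x_2^2+x_3^2\not\equiv 0$ and $x_1+x_2+x_3\equiv 0\bmod 3$ force $Q(\omega)$ into the nonsquare unit class), that $\theta(SO(L_3+\nu))=(\Q_3^\times)^2$ exactly. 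The rotation you are implicitly using to produce the class $3w_1w_2$ --- namely $-\mathrm{id}$ on the binary complement of $\nu$, which does fix $\nu$ and does generate the nontrivial spinor class of that complement --- fails to stabilize $L_3$: unlike at $p\neq 3$, $L_3$ does \emph{not} split as $\Z_3[\omega]\perp K_3$ (that sublattice has index divisible by $3$), and concretely $(\mathrm{id}_{\nu}\perp -\mathrm{id})(e_1)=\tfrac{2}{3}(e_1+e_2+e_3)-e_1\notin L_3$. The surviving rotations are those acting trivially on the order-$3$ quotient of the complement by $L_3\cap\nu^{\perp}$, and these all have square spinor norm. Your final index of $2$ happens to be insensitive to this mistake (the class of $3$ at the place $3$ is supplied globally by the rational number $3$ anyway --- which also shows your assembly remark that a uniformizer at $3$ ``contributes nothing to the $3$-coordinate'' is backwards), but the crucial fact that the nonsquare unit class at $3$ is missed is asserted rather than proved, and you have no substitute for Teterin's symmetry-generation theorem, which is precisely the tool the paper uses to control the full coset stabilizer rather than the subgroup fixing $\nu$.
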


\begin{proof}
(1)  For primes $p\not\in T$, it is immediate that $\Z_p^\times\subseteq \theta(SO(L_p+\nu))$ since $L_p+\nu=L_p\cong\lan 1,1,1\ran$.  For primes $p\in T$, we have $(m-2)=p^k\epsilon$ and $\frac{m-4}{2}=\gamma$ where $k\geq 1$ and $\epsilon,\gamma\in \Z_p^\times$.  Then, in the basis $\{\nu,e_1,e_2\}$ we have 
\[
M_p\cong\begin{bmatrix}
3\gamma^2 & p^k\epsilon\gamma & p^k\epsilon\gamma \\
p^k\epsilon\gamma  & p^{2k}\epsilon^2 & 0\\
p^k\epsilon\gamma  & 0 & p^{2k}\epsilon^2,
\end{bmatrix}
\]
and by a change of basis to $\{\nu,p^k\epsilon\nu-3\gamma e_1,p^k\epsilon\nu-3\gamma e_2\}$ we obtain 
\[
M_p\cong \lan3\gamma^2\ran\perp 3p^{2k}\epsilon^2\gamma^2\begin{bmatrix}
6 & -3\\-3 & 6
\end{bmatrix}.
\]
From this we clearly see that $\Z_p[\nu]$ splits $M_p$ as an orthogonal summand; in other words, $M_p$ is the orthogonal sum $M_p =\Z_p[\nu]\perp K_p$, where 
\[
K_p\cong 9p^{2k}\epsilon^2\gamma^2\begin{bmatrix}
2 & -1\\-1 & 2
\end{bmatrix}
\]
is a binary modular lattice.  From here it follows immediately from \cite[Satz 3]{K56} that $\Z_p^\times\subseteq \theta(SO(K_p))$ for odd prime $p$. When $p=2$ the result follows from \cite[Lemma 1]{H75}.  On the other hand, setting
\[
\omega:=\frac{p^k\epsilon}{\gamma}\nu=e_1+e_2+e_3
\] 
the set of vectors $\{\omega,\omega-3e_1,\omega-3e_2\}$ form a basis for $L_p$, and in this basis we obtain 
\[
L_p= \Z_p[\omega]\perp K_p.
\]
Any isometry $SO(K_p)$ can be extended to an isometry $\sigma\in SO(M_p)$ which simultaneously satisfies $\sigma(\nu)=\nu$ and $\sigma(L_p)=L_p$, and therefore $\sigma(L_p+\nu)=L_p+\nu$.  Hence $\sigma\in SO(L_p+\nu)$, from which we may conclude that $\theta(SO(K_p))\subseteq \theta(SO(L_p+\nu))$ and hence $\Z_p^\times\subseteq \theta(SO(L_p+\nu))$.  

Now for any $\vec{x}=(x_p)\in J_\Q$ we know that $x_p$ is a unit at almost every prime.  Therefore, multiplying by a suitable element of $a\in \Q^\times$ we can assume $a\vec{x}=(ax_p)$ is a unit at every prime.  Moreover, since for the infinite prime $\theta(L_{\infty}+\nu)=\theta(SO(V_{\infty}))=\R^{\times^2}$, we only need to chose $a$ to have the same sign as $x_\infty$.  Chosen in this way, $a\vec{x}$ is an element in the restricted product. 

(2)  When $m\equiv 2\mod 12$, then $L\cong \lan (m-2)^2,(m-2)^2,(m-2)^2\ran$ in the basis $\{e_1,e_2,e_3\}$ and $\nu=\frac{m-4}{2(m-2)}[e_1+e_2+e_3]$.  For primes away from $T$, we once again know that $L_p+\nu=L_p\cong\lan 1,1,1\ran$, and hence $\Z_p^\times\subseteq \theta(SO(L_p+\nu))$.   Moreover, for primes $p\neq 3$ in $T$, the argument from above is still sufficient to show that $\Z_p^\times\subseteq \theta(SO(L_p+\nu))$.   When $p=2$ we make one further observation, namely that in this case $\Z_2^\times= \theta(SO(L_2+\nu))$.  If $\sigma\in SO(L_2+\nu)$ then we know $\sigma(\nu)=\nu+x$ for $x\in L_2$, hence 
\[
Q(\nu)=Q(\sigma(\nu))=Q(\nu+x)=Q(\nu)+Q(x)+2B(\nu,x),
\]
by a simple congruence argument we see that no nontrivial $x$ can satisfy this equality.  Therefore the only isometries of $L_2+\nu$ are those fixing $\nu$, and hence are precisely the isometries of $K_2$ described above.  In particular, it follows from \cite[Lemma 1]{H75} that $\Z_2^\times{\Q_2^\times}^2=\theta(SO(K_2))=\theta(SO(L_2+\nu))$.

When $p=3$, then we consider the generalized lattice $M/L$, as defined in \cite{T97}, which has the orthogonal group 
\[
O(M_3/L_3)=\{\sigma\in O(V_3):\sigma(x)\in x+L_3 \text{ for all }x\in M_3\},
\]  
also defined in \cite{T97}.  An isometry $\sigma$ is in $O(M_3/L_3)$ precisely when $\sigma(L_3)=L_3$ and $\sigma(\nu)\equiv \nu\mod L_3$.  Therefore, from (\ref{orthog}), we see that $O(M_3/L_3)=O(L_3+\nu)$ and hence $SO(M_3/L_3)=SO(L_3+\nu)$.  However, from \cite[Theorem 2]{T97} we know that $\theta(SO(M_3/L_3))$, and hence $\theta(SO(L_3+\nu))$ is generated by pairs of symmetries coming from $O(M_3/L_3)$.  If $\tau$ is a symmetry in $O(M_3/L_3)$, then there is some $\omega=e_1x_1+e_2x_2+e_3x_3\in L_3$ such that 
\[
\tau(y)=\tau_\omega(y)=y-\frac{2B(\omega,y)}{Q(\omega)}\omega
\]
for every $y\in L_3$.  We may assume that $x_1,x_2,x_3\in \Z_3$, and without loss of generality, that $x_1\in \Z_3^\times$.  But now 
\[
\tau_\omega(e_1)=e_1-\frac{2B(\omega,e_1)}{Q(\omega)}\omega=e_1-\frac{2\cdot (m-2)^2}{(m-2)^2(x_1^2+x_2^2+x_3^2)}\omega=e_1-\frac{2}{(x_1^2+x_2^2+x_3^2)}\omega\in L_3
\]
and hence $x^2+y^2+z^2\not \equiv 0\mod 3$.  This means that at least one of $x_2$ and $x_3$ is not a unit, without loss of generality, say $x_3\not\in \Z_3^\times$.  On the other hand, 
\[
\tau_\omega(\nu)=\nu-\frac{2B(\omega,\nu)}{Q(\omega)}\omega=\nu-\frac{(m-4)(m-2) (x_1+x_2+x_3)}{(m-2)^2(x_1^2+x_2^2+x_3^2)}\omega=\nu-\frac{(m-4)(x_1+x_2+x_3)}{(m-2)(x_1^2+x_2^2+x_3^2)}\omega
\] 
and since $\tau_\omega(\nu)\equiv\nu\mod L_3$ it must follow that $x_1+x_2+x_3\equiv 0\mod 3$.  Therefore the only possibility is that $x_2\in \Z_3^\times$ and $x_1\not\equiv x_2\mod 3$.  Therefore, $(m-2)^{-2}Q(\omega)\equiv 2\mod 3$, and consequently $\theta(SO(M_3/L_3))$, and hence $\theta(SO(L_3+\nu))$, contains no nontrivial elements.  That is, $2\not \in \theta(SO(L_3+\nu))$.  
Finally, we will show that the number of spinor genera in the genus of $L+\nu$, in this case, is equal to 2.  In order to show that 
\[
\left[J_\mathbb{Q}:\Q^\times\prod_{p\in \Omega}\theta(SO(L_p+\nu))\right]=2,
\]
we prove that the principal idele $1$ and the idele $\iota$, given by 
\[
\iota_p:=\begin{cases} 1 & \text{if } p\neq 3,\\ 2 &\text{if }p=3,\end{cases}
\]
are inequivalent and the cosets $[1]$ and $[\iota]$ are a full set of representatives of the quotient space. 

For any $\vec{x}=(x_p)\in J_\Q$, we know that $x_p$ is a unit for almost every $p$.  Multiplying by a suitable element $a$ in $\Q^\times$ (where $a$ has the same sign as $x_\infty$) if necessary, we may assume that $ax_p$ is a unit at every prime $p$ (including the infinite prime).  Since $\Z_p^{\times}\subseteq \theta(SO(L_p+\nu))$ for $p\neq 3$, the coset of $\vec{x}$ is completely determined by the congruence class of $ax_3$.  If $ax_3\equiv 1\mod 3$ then $\vec{x}\in [1]$ and if $ax_3\equiv 2\mod 3$ then $\vec{x}\in [\iota]$.

\end{proof}

Although Proposition \ref{prop:spngen} (1) doesn't directly lead to a proof of Theorem \ref{thm:main}, it gives a strong expectation for the results given in Theorem \ref{thm:main}.  Namely, there is a result of Duke and Schulze-Pillot \cite{DS-P90} which used the analytic theory to obtain the conclusion in the case of lattices that every sufficiently large integer primitively represented by the spinor genus is also represented by the lattice.  Since the lattice and genus coincide by Proposition \ref{prop:spngen} (1), one may expect a result similar to Duke and Schulze-Pillot's to imply Theorem \ref{thm:main}.  Since no analogous theorem has yet been developed, we turn to a trick in the analytic theory to prove Theorem \ref{thm:main}.

\section{Analytic approach}\label{sec:analytic}

In this section, we use the analytic proof to show Theorem \ref{thm:main}, Theorem \ref{thm:2mod3} (2), and the first statement of Theorem \ref{thm:2mod3} (1).  These are rewritten in the following theorem. 

\begin{theorem}\label{thm:m2mod4}
Suppose that $m\not\equiv 0\pmod{4}$.  Then we have the following. 

\noindent
\begin{enumerate}[leftmargin=*,label={\rm(\arabic*)}]
\item
If $m\not\equiv 2\pmod{3}$, then every sufficiently large $n$ may be represented in the form 
$$
n=p_m(x)+p_m(y)+p_m(z)
$$
for some $x,y,z\in\Z$.  That is to say, $P_m$ is almost universal.
\item
If $m\equiv 2\pmod{12}$, then every sufficiently large $n\notin \mathcal{S}_{m,3}^{\operatorname{e}}$ may be represented in the form 
$$
n=p_m(x)+p_m(y)+p_m(z)
$$
for some $x,y,z\in\Z$. 
\item
If $m\equiv 2\pmod{3}$ is odd, then every sufficiently large $n\notin \mathcal{S}_{m,3}^{\operatorname{o}}$ may be represented in the form 
$$
n=p_m(x)+p_m(y)+p_m(z)
$$
for some $x,y,z\in\Z$.
\end{enumerate}
\end{theorem}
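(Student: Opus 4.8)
The plan is to translate everything into the language of half-integral weight modular forms and then to show that the failure of representation can occur only inside one square class. By Section \ref{sec:prelim}, an integer $n$ is represented by $P_m$ exactly when $r_{L+\nu}(\ell_n)>0$, and $r_{L+\nu}(\ell_n)$ is the $\ell_n$-th Fourier coefficient of the weight $3/2$ form $\Theta_m=\Theta_{L+\nu}$ on $\Gamma_1(4N^2)$ (see \cite{Shimura}). I would first write the unconditional decomposition
\[
\Theta_m=\mathcal{E}_m+\mathcal{U}_m+f_m,
\]
where $\mathcal{E}_m=\mathcal{E}_{\genus(L+\nu)}$ is the Eisenstein series produced by the van der Blij--Shimura form of the Siegel--Weil formula \cite{vanderBlij,Shimura2004}, $\mathcal{U}_m$ is the orthogonal projection of the cusp form $\Theta_m-\mathcal{E}_m$ onto the span of the unary theta functions $\vartheta_{h,t}$ of \eqref{eqn:unarydef}, and $f_m$ is the orthogonal complement. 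Note this decomposition does \emph{not} invoke Conjecture \ref{conj:SiegelWeil}; it is precisely the analytic device that stands in for a coset version of the Duke--Schulze-Pillot theorem.

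Next I would assemble the two size estimates driving the argument. Because $m\not\equiv0\pmod4$, Lemma \ref{genrep1} shows $\ell_n$ is represented by $L_p+\nu$ for every prime $p$; hence the $\ell_n$-th coefficient of $\mathcal{E}_m=\Theta_{\genus(L+\nu)}$ is a product of positive local densities, and Siegel's ineffective class number bound \cite{Siegel} yields $a_{\mathcal{E}_m}(\ell_n)\gg_\epsilon \ell_n^{1/2-\epsilon}$. The hypothesis $m\not\equiv0\pmod4$ is exactly what prevents the density at the anisotropic prime $2$ from decaying. On the other side, since $f_m$ is orthogonal to all unary theta functions, its Shimura lift is a genuine cusp form and Duke's bound \cite{DS-P90} gives $a_{f_m}(\ell)\ll \ell^{1/2-\delta}$ for some $\delta>0$. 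Therefore $r_{L+\nu}(\ell_n)=a_{\mathcal{E}_m}(\ell_n)+a_{\mathcal{U}_m}(\ell_n)+a_{f_m}(\ell_n)$ is positive for all sufficiently large $n$ as soon as $\ell_n$ avoids the support of $\mathcal{U}_m$.

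It then remains to locate $\operatorname{supp}(\mathcal{U}_m)$ inside the progression $\{\ell_n\}$. As $\mathcal{U}_m$ is a combination of the $\vartheta_{h,t}$ with $t$ a squarefree divisor of $N$, its support lies in $\bigcup_t t\Z^2$, and one can prune these square classes by congruences. A parity check shows $\ell_n$ is odd whenever $m\not\equiv0\pmod4$, killing all even $t$; and since every odd prime dividing $N$ lies in $T$, for odd $p\in T$ with $p\neq3$ a direct computation gives $\ell_n\equiv 3\left(\tfrac{m-4}{2}\right)^2\pmod p$ for $m$ even (resp. $\ell_n\equiv 12\pmod p$ for $m$ odd), a unit, so $p\nmid \ell_n$ and $\ell_n\notin t\Z^2$ whenever $p\mid t$. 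This confines the relevant index to $t\in\{1,3\}$, and $t=3$ is admissible only when $3\mid N$, i.e.\ when $3\mid(m-2)$, i.e.\ when $m\equiv2\pmod3$. Consequently, for $m\not\equiv2\pmod3$ the progression $\{\ell_n\}$ meets $\operatorname{supp}(\mathcal{U}_m)$ only finitely often, giving part (1); while for $m\equiv2\pmod{12}$ (resp.\ odd $m\equiv2\pmod3$) the only surviving class is $3\Z^2$, and $\{\ell_n\}\cap 3\Z^2$ is exactly the set cut out by $\ell_n=3r^2$ defining $\mathcal{S}_{m,3}^{\operatorname{e}}$ (resp.\ $\mathcal{S}_{m,3}^{\operatorname{o}}$); outside it the estimate above forces representation, yielding parts (2) and (3).

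The step I expect to be the main obstacle is this last one, namely pinning down $\operatorname{supp}(\mathcal{U}_m)$ rigorously. The congruence bookkeeping narrows the possible classes to $t\in\{1,3\}$, but the trivial class $t=1$ (perfect-square values of $\ell_n$, which do occur for infinitely many $n$) is not eliminated by any congruence and must be excluded by a genuinely local argument at the anisotropic prime $2$, showing that $\Z^2$ is not spinor exceptional and hence contributes no unary theta term; dually, one must confirm that no spurious $t=3$ component survives when $m\not\equiv2\pmod3$ despite $3\nmid N$. A secondary subtlety is uniformity: since Siegel's bound is ineffective and the $2$-adic density must be controlled for all $\ell_n$ at once, the implied constant in $a_{\mathcal{E}_m}(\ell_n)\gg_\epsilon \ell_n^{1/2-\epsilon}$ is non-explicit, which is the source of the ineffectivity flagged after Theorem \ref{thm:main}.
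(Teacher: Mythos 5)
Your overall architecture is the same as the paper's: decompose $\Theta_m=\mathcal{E}_m+\mathcal{U}_m+f_m$, use Lemma \ref{genrep1} plus Siegel to get $a_{\mathcal{E}_m}(\ell_n)\gg \ell_n^{1/2-\varepsilon}$, use Duke's bound (which, minor point, is from \cite{Duke}, not \cite{DS-P90}) to beat $f_m$, and prune the square classes $t\Z^2$ supporting $\mathcal{U}_m$ down to $t\in\{1,3\}$ with $t=3$ possible only when $3\mid(m-2)$. But the step you flag as the main obstacle — eliminating $t=1$ — is where your proposal goes wrong, and it goes wrong in a way that matters. You assert that perfect-square values of $\ell_n$ ``occur for infinitely many $n$'' and that $t=1$ ``is not eliminated by any congruence.'' Both claims are false. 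For $m\equiv 2\pmod 4$ one has $4\mid(m-2)$ and $(m-4)/2$ odd, so $\ell_n=2(m-2)n+3\left(\frac{m-4}{2}\right)^2\equiv 3\pmod 8$; for $m$ odd, $\ell_n=8(m-2)n+3(m-4)^2\equiv 3\pmod 8$ since $m-4$ is odd. (Equivalently: each square $\left((m-2)x+\frac{m-4}{2}\right)^2$ is $\equiv 1\pmod 8$, so the sum of three is $\equiv 3\pmod 8$.) Since squares are $\equiv 0,1,4\pmod 8$, no $\ell_n$ is ever a perfect square, so a $t=1$ unary theta function, supported on squares, cannot contribute — this single congruence modulo $8$ is precisely how the paper kills $t=1$ in all three parts. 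Your proposed substitute, a ``genuinely local argument at the anisotropic prime $2$'' showing $\Z^2$ is not spinor exceptional for the coset, is not available at this point of the paper: determining spinor exceptions for lattice cosets is exactly the open content of Conjecture \ref{conj:SiegelWeil} (verified only for $m=14$ by finite computation in Proposition \ref{prop:SiegelWeil}), so routing part (1) through it would rest the theorem on an unproved conjecture.

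A secondary gap sits inside your Siegel-bound step. The lower bound $a_{\mathcal{E}_m}(\ell_n)\gg\ell_n^{1/2-\varepsilon}$ requires bounded divisibility of $\ell_n$ at the bad primes. At odd $p\mid(m-2)$ with $p\neq 3$ you correctly observe $p\nmid\ell_n$, and $\ell_n$ odd handles $p=2$; but when $\ord_3(m-2)=1$ (which happens in cases (2) and (3)), $\ord_3(\ell_n)$ is unbounded along the progression, and Eisenstein coefficients at $3^{2k}\ell'$ need not grow with $k$ in general. The paper closes this by noting that $X^2+Y^2+Z^2\equiv 0\pmod 3$ forces $3$ to divide all of $X,Y,Z$ or none, so the local density at $3$ for the coset equals the primitive local density, and since $3$ is isotropic for $\lan 1,1,1\ran$ the densities grow as needed. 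Without this (or an equivalent) observation, your growth estimate for $\mathcal{E}_m$ is not justified for all $n$. With the mod-$8$ elimination of $t=1$ and the $3$-adic primitivity remark supplied, your argument becomes the paper's proof.
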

\begin{proof}
We split the proof into four pieces.  First we assume that $m\equiv 2\pmod{4}$ and then split depending on the congruence class of $m$ modulo $3$, and we will later assume that $m$ is odd. 

By completing the square, a solution to the given representation is equivalent to a solution to
$$
2(m-2)n+3\!\left(\frac{m-4}{2}\right)^2= \!\left((m-2)x-\frac{m-4}{2}\right)^2+\!\left((m-2)y-\frac{m-4}{2}\right)^2+\!\left((m-2)z-\frac{m-4}{2}\right)^2.
$$
We set $N:=(m-2)$ and $\ell=\ell_n:=2(m-2)n+3\left(\frac{m-4}{2}\right)^2$.  Denoting by $r_{m}(\ell)$ the number of such solutions (with $r_m(\ell):=0$ if $\ell$ is not in the correct congruence class), we hence consider the generating function 
\begin{equation}
\Theta_m(\tau):=\sum_{n\geq 0} r_{m}\!\left(\ell_n\right) q^{\ell_n},
\end{equation}
where $q:=e^{2\pi i \tau}$.  The function $\Theta_m$ is a theta series for a lattice coset.  Since the gram matrix $A=N^2\cdot I_3$ associated to the lattice is diagonal with even entries, \cite[Proposition 2.1]{Shimura} (with $P(m)=1$, $\tau\mapsto 2N\tau$, and $h=((m-4)/2,(m-4)/2,(m-4)/2)^T$) implies that $\Theta_m$ is a weight $3/2$ modular form on $\Gamma_1(4N^2)$ with some multiplier.  As usual, we decompose
\begin{equation}\label{eqn:decompose}
\Theta_m(\tau)=\mathcal{E}_m(\tau)+\mathcal{U}_m(\tau)+f_m(\tau),
\end{equation}
where $\mathcal{E}_m$ is in the space spanned by Eisenstein series, $\mathcal{U}_m$ is in the space spanned by unary theta functions, and $f_m$ is a cusp form which is orthogonal to unary theta functions.  Of course, each term in the decomposition is modular of weight $3/2$ on $\Gamma_1(4N^2)$ with the same multiplier.  

We now follow an argument of Duke and Schulze-Pillot \cite{DS-P90}, who proved that sufficiently large integers are primitively represented by quadratic forms if and only if they are primitively represented by the spinor genus of the quadratic form (i.e., they investigated the coefficients of theta series with $\Theta_m$ replaced with the theta series for a lattice).  By work of Duke \cite{Duke}, the Fourier coefficients of $f_m$ grow at most like $\ell^{3/7+\varepsilon}$, while the coefficients of $\mathcal{E}_m$ are certain class numbers and by Siegel's (ineffective) bound \cite{Siegel} for class numbers, they grow like $\gg \ell^{1/2-\varepsilon}$ for any $\ell$ supported on the coefficients of $\mathcal{E}_m$ and which are primitively represented by the genus.  The requirement that the representations are primitive comes from the fact that there are certain primes $p$ for which the $p^r\ell$-th coefficients of Eisenstein series do not grow as a function of $r$; this phenomenon is explained on the algebraic side in the case of lattices by realizing $p$ as an anisotropic prime.  When the power of such primes is bounded, the coefficients of $\mathcal{E}_{m}$ grow faster than the coefficients of $f_m$.   We may hence disregard $f_m$ completely whenever the $\ell$-th coefficient is not supported in $\mathcal{U}_m$ and the power of bad primes $p$ dividing $\ell$ is bounded.  However, Shimura \cite{Shimura2004} used the Siegel--Weil formula for inhomogeneous quadratic forms (i.e., quadratic polynomials) to show that $\mathcal{E}_m$ is the weighted average of representations by members of the genus of the lattice coset and simultaneously the product of the local densities.  For $p\mid m-2$ with $p\neq 3$, the power of $p$ dividing $\ell$ for $p\mid m-2$ is bounded from above by the congruence conditions.  Similarly, if $\ord_3(m-2)> 1$, then $\ord_3(\ell)$ is bounded.  In the special case that $\ord_3(m-2)=1$, we note that $X^2+Y^2+Z^2\equiv 0\pmod{3}$ implies that either $3$ divides each of $X$, $Y$, and $Z$ or none of them.  Hence, in this case, the local density at $3$ for representations of $\ell$ by the lattice coset equals the local density at $3$ corresponding to the number of primitive representations by the lattice.  Since $3$ is not an anisotropic prime for the lattice $\lan 1,1,1\ran$, we conclude that the local densities grow as expected.  Since $L_p+\nu=L_p$ for $p \nmid m-2$ by Lemma \ref{lem:Lp}, the coefficients of $\mathcal{E}_m$ grow $\gg \ell^{\frac{1}{2}-\varepsilon}$ whenever they are represented (due to the fact that $\ell$ is primitively represented by the lattice).  Hence the congruence conditions for $\mathcal{E}_m$ are equivalent to checking that the integer is represented locally, or in other words that the genus represents the given integer, which follows immediately from Lemma \ref{genrep1}.  

We claim furthermore that $\mathcal{U}_m$ is identically zero, from which the claim will follow.  We may decompose $\mathcal{U}_m(\tau)$ into a linear combination of finitely many unary theta functions (defined in \eqref{eqn:unarydef}). The goal now is to determine the possible $\vartheta_{h,t}$ in the decomposition with non-zero coefficient.  We do so by restricting the possible choices of $t$ with congruence conditions on $tr^2$ implied by the definition of $\Theta_m$.  The $\ell$ upon which the coefficients of $q^{\ell}$ in $\vartheta_{h,t}$ are supported must satisfy
$$
\ell =tr^2 \equiv 0\pmod{t}.
$$
However, the coefficients of the unary theta function are supported on the same integers as the original theta series $\Theta_m$, since the Eisenstein series are also supported on these coefficients and there would hence otherwise be integers $\ell$ upon which the coefficient of $\Theta_m$ is negative, contradicting the fact that it is a generating function for the non-negative integers $r_{m}(\ell)$.  Therefore, we conclude that
$$
2(m-2)n+3\left(\frac{m-4}{2}\right)^2=\ell\equiv 0\pmod{t}.
$$
Since $2(m-2)$ is even and $3\left((m-4)/2\right)^2$ is odd, we conclude that $t$ must be odd.  The congruence then becomes 
$$
3\left(\frac{m-4}{2}\right)^2\equiv 0\pmod{t},
$$
where $t$ is some divisor of (the odd part of) $m-2$.  Rewritten, this implies that 
$$
t\; \Big| \left(m-2,3\left(\frac{m-4}{2}\right)^2\right).
$$
Now note that if $p\mid m-2$ and $p\mid m-4$, then 
$$
p\mid m-2-(m-4)=2\implies p=2.
$$
Thus, since $(m-4)/2$ is odd, 
$$
\left(m-2,3\left(\frac{m-4}{2}\right)^2\right)=(m-2,3).
$$
We now split the proof into two cases to prove (1) and (2).

To prove (1) for $m$ even, we assume that $m\not\equiv 2\pmod{3}$, so 
$$
t\mid (m-2,3)=1.
$$
We conclude that $t=1$.  However, since $m-2$ is even and $(m-4)/2$ is odd, we also have
$$
\ell =\left((m-2)x+\frac{m-4}{2}\right)^2+\left((m-2)y+\frac{m-4}{2}\right)^2+\left((m-2)z+\frac{m-4}{2}\right)^2\equiv 3\pmod{8}.
$$
In particular, $\ell=tr^2\equiv 3\pmod{8}$ implies that $t=1$ is impossible.  Since there are no possible choices of $t$, we conclude that $\mathcal{U}_m=0$.  This gives the first claim in the case $m\equiv 2\pmod{4}$.

To prove (2), we assume that $m\equiv 2\pmod{3}$ (i.e., $m\equiv 2\pmod{12}$), so that 
$$
t\mid (m-2,3)=3
$$
implies that $t=1$ or $t=3$.  The case $t=1$ is again impossible by the congruence condition modulo $8$ considered in part (1).  It follows that $\mathcal{U}_m$ is a linear combination of forms all of which have $t=3$.  Hence every $n$ suffiently large for which the corresponding $\ell$ is not of the form $3r^2$ must be represented as the sum of three $m$-gonal numbers.

We now consider the case $m$ odd.  In this case, 
\[
n\!=\!p_m(x)+p_m(y)+p_m(z)
\]
is equivalent to 
\begin{multline}\label{eqn:modd}
8n(m-2)+3(m-4)^2 = \left(2(m-2)x+(m-4)\right)^2 + \left(2(m-2)y+(m-4)\right)^2 +\\ 
+\left(2(m-2)z+(m-4)\right)^2. 
\end{multline}

Letting $R_{m}(\ell)$ be the number of solutions to \eqref{eqn:modd} with $\ell=\ell_n:=8n(m-2)+3(m-4)^2$, by \cite[Proposition 2.1]{Shimura} we see that 
\[
\Theta_m'(\tau):=\sum_{n\geq 0} R_{m}\!\left(\ell_n\right) q^{\frac{\ell_n}{4N}}
\]
is a weight $3/2$ modular form on $\Gamma_1(4N^2)$ with some multiplier.  Here $N=(m-2)$, as in the $m\equiv 2\pmod{4}$ case above.   Firstly, Lemma \ref{genrep1} implies that $\ell_n$ is represented locally, or equivalently, by the genus of the lattice coset. We conclude that the relevant coefficients of the Eisenstein series $\mathcal{E}_m'$ in the decomposition \eqref{eqn:decompose} are positive and it remains to again determine the unary theta functions $\vartheta_{h,t,2N}$ which may occur in the decomposition \eqref{eqn:decompose}.  Arguing as before, we have $\ell \equiv 0\pmod{t}$ and $t\mid 2N$.  However, since $m$ is odd, $m-4$ is also odd, so $\ell\equiv 3\pmod{8}$.  It follows that $t$ is odd, and hence $t\mid N$.  We conclude that 
\[
t\mid \left(m-2, 3(m-4)^2\right).
\]  
Since $(m-2,m-4)=1$, we conclude that $t\mid (m-2,3)$.  

Now we complete the proof of (1) when $m$ is odd. If $m\not\equiv 2\pmod{3}$, then necessarily $t=1$.  However, $\ell\equiv 3\pmod{8}$ implies that $\ell$ is not a square, and hence $t=1$ is impossible.  

To prove (3), we assume $m$ is odd and $m\equiv 2\mod 3$.  Then, since $t$ is a divisor of $1$ or $3$, we conclude that $t=1$ or $t=3$.  However, $t=1$ is again impossible because $\ell_n\equiv 3\pmod{8}$ (as defined before the definitions of $\Theta_m$ and $\Theta_m'$ above) for every $n\in\N_0$.  Therefore we have $t=3$ and the only possible exceptions are in the square class $3\Z^2$.  

\end{proof}

\section{Linking the analytic and algebraic theories and forms which are not almost universal}\label{sec:link}
In this section, we prove Proposition \ref{prop:SiegelWeil} and Theorem \ref{thm:P14}, establishing that $P_{m}$ is not almost universal for all $m\equiv 2\pmod{12}$.  We draw on intuition from Proposition \ref{prop:spngen} to both motivate the proof of Theorem \ref{thm:P14} and explain the statement.  For $m\equiv 2\pmod{4}$ but $m\not\equiv 2\pmod{12}$, Proposition \ref{prop:spngen} (1) implies that there is only one spinor genus in the genus, and we proved in Theorem \ref{thm:main} that $P_m$ is indeed almost universal in this case.  On the other hand, for $m\equiv 2\pmod{12}$, Proposition \ref{prop:spngen} (2) implies that there are two spinor genera.  It is hence natural to search for ``primitive spinor exceptions'' for the lattice coset by studying whether there are families of exceptions in certain square classes; from the point of view of modular forms, we are searching for the component of the cuspidal part coming from unary theta functions, and Proposition \ref{prop:SiegelWeil} gives us a way to discover a unary theta function.
  
In order to prove Proposition \ref{prop:SiegelWeil}, we explicitly determine the genus and spinor genus of a lattice coset.  Let $L+\nu$ be the lattice coset associated to $P_{14}$ as in Section \ref{sec:algebraic}.  That is to say, $L:=\lan 12^2,12^2,12^2\ran$ and $\nu:=
\frac{5}{12}(e_1+e_2+e_3)$. 
\begin{lemma}\label{lem:genspnm=14}
\noindent

\noindent
\begin{enumerate}[leftmargin=*,label={\rm(\arabic*)}]
\item
 Defining 
\[
\mu:=\frac{1}{12}\!\left(5e_1+e_2+e_3\right),
\]
the classes in the genus of $L+\nu$ are then represented by $L+\nu$, $L+5\nu$, $L+\mu$, and $L+5\mu$.

\item
The cosets $L+\nu$ and $L+\mu$ form one spinor genus and the cosets $L+5\nu$ and $L+5\mu$ form the other spinor genus.  
\end{enumerate}
\end{lemma}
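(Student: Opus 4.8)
The plan is to exhibit the four cosets concretely, show they are pairwise inequivalent and all belong to $\genus(L+\nu)$, confirm by a mass computation that they exhaust the genus, and then sort them into spinor genera using the spinor norm data from Proposition \ref{prop:spngen}. First I would reduce each translation vector modulo $L$, obtaining the representatives $\tfrac{5}{12}(e_1+e_2+e_3)$, $\tfrac{1}{12}(e_1+e_2+e_3)$, $\tfrac{1}{12}(5e_1+e_2+e_3)$, and $\tfrac{1}{12}(e_1+5e_2+5e_3)$ for $L+\nu,\,L+5\nu,\,L+\mu,\,L+5\mu$. Minimizing the norm $\sum_i(a_i+12b_i)^2$ over $b_i\in\Z$ shows the minimal norms are $75$, $3$, $27$, and $51$, respectively. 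Since the minimal norm is an isometry invariant, these four distinct values already force the cosets to lie in four distinct classes.

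Next I would check genus membership prime by prime. For $p\nmid 6$ each coset equals $L_p$ by Lemma \ref{lem:Lp}, and at $p=2$ the four translation vectors have pairwise differences lying in $L_2$ (each difference is a $2$-adic-integral multiple of $e_1+e_2+e_3$ or $e_2+e_3$), so the local cosets coincide. Only $p=3$ requires genuine work: here $\nu,5\nu,\mu,5\mu$ lie in $\tfrac13 L_3$ and reduce in $\tfrac13 L_3/L_3\cong\mathbb{F}_3^3$ to $(2,2,2)$, $(1,1,1)$, $(2,1,1)$, $(1,2,2)$, all nonzero isotropic vectors, forming a single orbit under $O(L_3)$. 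Explicit diagonal isometries realize the required maps: $\mathrm{diag}(1,-1,-1)$ carries $\nu$ to $\mu$ modulo $L_3$, while $-\mathrm{id}$ carries $\nu$ to $5\nu$ and $\mu$ to $5\mu$. Since $-\mathrm{id}$ is improper, I would compose it with an improper coset automorph fixing the relevant vector (for instance $\tau_{e_1-e_2}$ fixes $\nu$ and $\tau_{e_2-e_3}$ fixes $\mu$) to obtain a proper local isometry; as $\genus=\genus^+$ in our situation (Section \ref{sec:prelimlattice}), this places all four cosets in $\genus(L+\nu)$.

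To rule out further classes I would compute the total mass of $\genus(L+\nu)$ from the van der Blij--Shimura mass formula (equivalently, from the product of local densities underlying $\mathcal{E}_{14}$) and compare it with $\sum_i\omega_{L+\eta_i}^{-1}$ taken over the four cosets, whose automorph groups are small and directly computable (for instance $\omega_{L+\nu}=6$, the permutations fixing the symmetric vector $\nu$). Agreement of the two masses, together with the distinctness established above, forces the four cosets to be a complete set of class representatives. I expect this mass comparison to be the main computational obstacle.

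Finally, I would split the genus into spinor genera using the spinor norm. By Proposition \ref{prop:spngen}(2) there are exactly two spinor genera, and the quotient $J_\Q/(\Q^\times\prod_p\theta(SO(L_p+\nu)))$ is detected solely by the class modulo squares at $p=3$, the nontrivial coset being that of the idele $\iota$ with $\iota_3=2$. For each pair I would track the spinor norm of a connecting adelic isometry, which is a square away from $3$. The map $\mathrm{diag}(1,-1,-1)=\tau_{e_2}\tau_{e_3}$ sending $\nu$ to $\mu$ has spinor norm $Q(e_2)Q(e_3)=144^2$, a square, so $L+\nu$ and $L+\mu$ share a spinor genus; on the other hand the proper map $(-\mathrm{id})\tau_{e_1-e_2}$ sending $\nu$ to $5\nu$ has spinor norm $\equiv Q(e_1-e_2)=288\equiv 2\pmod{(\Q_3^\times)^2}$, a non-square, placing $L+5\nu$ in the other spinor genus, and the analogous map $(-\mathrm{id})\tau_{e_2-e_3}$ gives the same non-square spinor norm for $\mu$ versus $5\mu$. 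This yields the stated decomposition. The delicate point throughout is the bookkeeping at $3$: an improper local factor must first be corrected to a proper isometry before its spinor norm can be matched against the idele class group of Proposition \ref{prop:spngen}.
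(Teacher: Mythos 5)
Your local analysis and your spinor-genus bookkeeping are sound and largely coincide with the paper's own proof: the coincidence of all four cosets at $p=2$, the symmetry $\tau_{e_2}\circ\tau_{e_3}$ carrying $\nu$ to $\mu$ modulo $L_3$, the correction of improper maps such as $-\mathrm{id}$ by coset automorphs like $\tau_{e_1-e_2}$, and the reliance on Proposition \ref{prop:spngen}(2) all appear in the paper in essentially the form you give. Two of your refinements are in fact cleaner than the printed argument. First, separating the four classes by their minimal norms ($75$, $3$, $27$, $51$) is a crisp substitute for the paper's observation that the four theta series differ (it is the same invariant, namely the first nonvanishing coefficient). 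Second, you separate the two spinor genera directly, by checking that a proper connecting map for $L+\nu$ and $L+5\nu$ has spinor norm $Q(e_1)Q(e_2)Q(e_3)Q(e_1-e_2)\equiv 288\equiv 2\pmod{(\Q_3^\times)^2}$, which lands in the nontrivial idele class $[\iota]$ from Proposition \ref{prop:spngen}(2); the paper instead argues by counting — there are exactly two spinor genera partitioning the genus, and since $\{L+\nu,L+\mu\}$ lie in one and $\{L+5\nu,L+5\mu\}$ in one, the two spinor genera cannot coincide without leaving the second one empty. Your route is more direct but leans on the standard double-coset criterion (that the spinor-norm idele class of a single connecting map decides the question), which you invoke only informally.

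The genuine gap is the exhaustion step in (1), which you reduce to a mass comparison you never carry out — and the reduction is shakier than you suggest. The van der Blij--Shimura theorem expresses $\mathcal{E}_{\genus(L+\nu)}$ as the mass-weighted average of theta series over the genus; its coefficients are local representation densities, not the total mass $\sum\omega^{-1}$, and for a coset the mass does not appear as a constant term (the cosets represent no vector of norm zero). Nor would verifying that $\mathcal{E}_{14}$ agrees with your four-class weighted average up to the valence bound close the gap: if $A_4$ is your average and $A_r$ the average over hypothetical missing classes, then $\mathcal{E}=A_4$ is also consistent with $A_r=\mathcal{E}$. A coset mass formula does exist (Kneser's Haar-measure approach, cited in the paper), but making it explicit at $p=2$ and $p=3$ is substantial work. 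The paper avoids all of this with an elementary finiteness device you should adopt: the conductor $c=\prod_p[L_p+\Z_p\nu:L_p]=12$ is a genus invariant, and since $L\cong\lan 144,144,144\ran$ has class number one, every class in $\genus(L+\nu)$ has the form $L+\mu'$ with $12\mu'\in L$; the finitely many candidates $\mu'\in\frac{1}{12}L/L$ are then pruned by local conditions and explicit isometries in $SO(V)$. One last small point: your count $\omega_{L+\nu}=6$ as "the permutations fixing $\nu$" still needs the easy check that no sign-change/permutation combinations beyond the coordinate permutations stabilize the coset, and likewise $\omega_{L+\mu}=2$.
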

\begin{proof}

\noindent 

\noindent
(1) Suppose that $M+\mu'\in \gen(L+\nu)$.  The conductor, $c$, as defined in \cite{H14}, is the smallest positive integer for which $c\nu\in L$, or equivalently,  
\[
c=\prod_p[L_p+\Z_p[\nu]:L_p]=12.
\]
Since any local isometry from $L_p+\nu$ to $M_p+\mu'$ must send $M_p$ to $L_p$, the conductor is an invariant of the genus and hence $c$ is also the minimal positive integer for which $c\mu'\in M$.  Moreover, since $M_p\cong L_p$ at every prime $p$ it must follow that $M\cong L$ since $L$ has class number 1.  Therefore each class in the genus of $L+\nu$ contains a coset of the form $L+\mu'$, and it just remains to determine the possible values of $\mu'$.   From here, given that the conductor of the genus is $c=12$,
enumerating the possibilities yields a finite set of possibilities for the class representatives in the genus. Many of these classes are immediately seen not to be in the same genus as $L+\nu$ simply by comparing the numbers locally represented by these classes.  We further restrict the set by explicitly finding elements of $SO(V)$ between different cosets. From this, one concludes that the representatives for the classes are a subset of the four claimed lattice cosets. Furthermore, one easily checks that the theta functions of the four cosets are different (i.e., they each represent integers a different number of times), from which one concludes that they cannot be equivalent under the action of $SO(V)$. Finally, we construct $\sigma\in SO_{\mathbb{A}}(V)$ mapping each of the cosets to each other. 

For $p\neq 2,3$, we have $\nu,\mu\in L_p$ and hence 
\begin{align*}
L_p+5\nu&=L_p=L_p+\nu,\\
L_p+5\mu&=L_p=L_p+\mu,
\end{align*}
so the identity map suffices in this case.  When $p=2$, we observe that $4\nu=\frac{5}{3}(e_1+e_2+e_3)\in L_2$ and $4\mu=\frac{1}{3}(5e_1+e_2+e_3)\in L_2$, so that 
\begin{align*}
\nu&=-4\nu+5\nu\in L_2+5\nu,\\
\mu&= -4\mu+ 5\mu\in L_2+5\mu,
\end{align*}
and $\frac{5}{3}e_1\in L_2$ implies that
\[
\nu=\frac{5}{12}(e_1+e_2+e_3)=5\mu-\frac{5}{3}(e_1)\in L_2+5\mu,
\]
implying $L_2+5\mu = L_2+\mu = L_2+\nu=L_2+5\nu$.  When $p=3$, then we consider the symmetries $\tau_{e_i}$ of $L_3$ which negate the vector $e_i$, and the symmetry $\tau_{e_2-e_3}$ which switch $e_2$ and $e_3$ and fix everything else.  Then 
\[
\tau_{e_2}\circ \tau_{e_3}(\nu)=\frac{5}{12}(e_1-e_2-e_3)=\mu-\frac{1}{2}(e_2+e_3)
\]
so $L_3+\nu\cong L_3+\mu$ and the same isometry can be used to show that $L_3+5\nu\cong L_3+5\mu$.  

Similarly,
\[
\tau_{e_1}\circ \tau_{e_2-e_3}(\mu)=\tau_{e_1}(\mu)=\frac{1}{12}(-5e_1+e_2+e_3)=5\nu-\frac{5}{2}e_1-2e_2-2e_3,
\]
and hence $L_3+\mu$ is isometric to $L_3+5\nu$.  From here we conclude that $L+\nu$, $L+5\nu$, $L+\mu$ and $L+5\mu$ are in the same genus. %It remains to show that $L+\nu$ and $L+\mu$ are in the same genus. In part (2), we actually furthermore show the stronger claim that $L+\nu$ and $L+\mu$ are in the same spinor genus, so we defer the proof of this claim to part (2). 

\noindent
\vspace{0.05in}

\noindent
(2)  By Proposition \ref{prop:spngen} (2), there are precisely 2 spinor genera in the genus of $L+\nu$.  Now it only remains to find representatives for the classes in the two spinor genera.  To do this, we need only find a map $\sigma=(\sigma_2,\sigma_3,...,\sigma_p,...)\in O'_\mathbb{A}(V)$ for which $\sigma_p(L_p+\nu)=L_p+\mu$ at every prime $p$.  For primes away from $2$ and $3$ we have 
\[
L_p+\mu=L_p=L_p+\nu,
\]
and so we can let $\sigma_p$ be the identity map for $p\neq 2,3$.  Moreover, when $p=2$, then 
\[
\nu=\mu+\frac{1}{3}e_2+\frac{1}{3}e_3
\]
so in fact $L_2+\nu=L_2+\mu$, and hence $\sigma_2$ can also be taken to be the identity map.   When $p=3$ we consider the symmetries $\tau_{e_2}$ and $\tau_{e_3}$, then 
\[
\tau_{e_2}\circ \tau_{e_3}(\nu)=\frac{5}{12}(e_1-e_2-e_3)=\mu-\frac{1}{2}e_2-\frac{1}{2}e_3,  
\]
and therefore we let $\sigma_3=\tau_{e_2}\circ\tau_{e_3}$.  Then clearly $\sigma$ is in the kernel of the adelic spinor norm map, since $Q(e_2)=Q(e_3)$, and this map sends $L+\nu$ to $L+\mu$.  A similar argument can be used to show that $L+5\nu$ and $L+5\mu$ are representatives for the two classes in the spinor genus of $L+5\nu$.    
\end{proof}

\begin{proof}[Proof of Proposition \ref{prop:SiegelWeil}]
The number of automorphs of either $L+\nu$ or $L+5\nu$ is $6$, while the number of automorphs of either $L+\mu$ or $L+5\mu$ is $2$.  Thus we conclude by Lemma \ref{lem:genspnm=14} that 
\[
\Theta_{\spin(L+\nu)}= \frac{3}{2}\left(\frac{\Theta_{L+\nu}}{6} + \frac{\Theta_{L+\mu}}{2}\right)
\]
and 
\begin{equation}\label{eqn:Thetagenm=14}
\mathcal{E}_{(L+\nu)}=\Theta_{\genus(L+\nu)} = \frac{3}{4}\left( \frac{\Theta_{L+5\nu}}{6} + \frac{\Theta_{L+5\mu}}{2}+\frac{\Theta_{L+\nu}}{6} + \frac{\Theta_{L+\mu}}{2}\right).
\end{equation}
We claim that 
\begin{equation}\label{eqn:Thetaspnm=14}
\Theta_{\spin(L+\nu)}(\tau) = \Theta_{\genus(L+\nu)}(\tau) -\frac{1}{8}\vartheta_{1,3,12}(\tau),
\end{equation}
which would imply the claim.  Both sides are modular forms of weight $3/2$ on $\Gamma_1(4\cdot 12^2)$ with the usual $\Theta^3$-multiplier.

Recall now that by the valence formula, a modular form of weight $k$ for $\Gamma\subseteq\SL_2(\Z)$ with some multiplier is uniquely determined by the first 
\[
\frac{k}{12}[\SL_2(\Z):\Gamma]
\]
Fourier coefficients, where $[\SL_2(\Z):\Gamma]$ is the index of $\Gamma$ in $\SL_2(\Z)$.  Since (cf. \cite[p. 2]{OnoBook})
\[
\left[\SL_2(\Z):\Gamma_1(N)\right]=N^2\prod_{p\mid N}\left(1-\frac{1}{p^2}\right),
\]
we have 
\[
\left[\SL_2(\Z):\Gamma_1(24^2)\right]=24^4\left(1-\frac{1}{4}\right)\left(1-\frac{1}{9}\right)= 221184.
\]

Hence we only need to check $\frac{3}{24}\cdot 221184 =27648$ Fourier coefficients to verify \eqref{eqn:Thetaspnm=14}.  This is easily verified with a computer by computing the relevant theta series.

\end{proof}

In order to prove Theorem \ref{thm:P14}, we use the $m=14$ case as a springboard from which the other cases follow.  In particular, we show the following theorem.
\begin{theorem}\label{thm:XYZmod12}
\noindent

\noindent
\begin{enumerate}[leftmargin=*,label={\rm(\arabic*)}]
\item If $\ell\equiv 1\pmod{12}$ is an odd prime, then 
\[
X^2+Y^2+Z^2=3\ell^2
\]
has no solutions in $X,Y,Z\in\Z$ with $X\equiv Y\equiv Z\equiv 5\pmod{12}$.
\item If $\ell\equiv 7\pmod{12}$ is an odd prime, then 
\[
X^2+Y^2+Z^2=3\ell^2
\]
has no solutions in $X,Y,Z\in\Z$ with $X\equiv Y\equiv Z\equiv 1\pmod{12}$.
\end{enumerate}
\end{theorem}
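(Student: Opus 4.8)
The plan is to convert the three‑square equation into a single norm relation and then force a contradiction out of the arithmetic of the Eisenstein integers $\Z[\omega]$, $\omega=e^{2\pi i/3}$. I will carry out part (1) in detail; part (2) is identical after interchanging the roles of $\ell-t$ and $\ell+t$. First I would dispose of imprimitive solutions: if $d=\gcd(X,Y,Z)$ then $d^2\mid 3\ell^2$ forces $d\in\{1,\ell\}$, and $d=\ell$ gives $(X,Y,Z)=(\pm\ell,\pm\ell,\pm\ell)$, whose coordinates are $\equiv\pm1\pmod{12}$ and so never all $\equiv 5\pmod{12}$. Hence every solution relevant to the theorem is primitive.

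Next I would apply the identity $3(X^2+Y^2+Z^2)=(X+Y+Z)^2+(X-Y)^2+(Y-Z)^2+(Z-X)^2$. Since $X\equiv Y\equiv Z\equiv 5\pmod{12}$, the three differences are divisible by $12$ and $X+Y+Z$ by $3$; setting $t:=(X+Y+Z)/3$, $p:=(X-Y)/12$, $q:=(Y-Z)/12$ collapses the equation to
\[
\ell^2=t^2+32\bigl(p^2+pq+q^2\bigr).\qquad(\star)
\]
Because $X\equiv t\pmod 4$ I get $t\equiv 1\pmod 4$, and a short reduction of $(\star)$ modulo $3$ together with the coset condition $X\equiv t-(p-q)\equiv 2\pmod 3$ forces $p\equiv q\pmod 3$ and $t\equiv 2\pmod 3$, i.e.\ $t\equiv 5\pmod{12}$. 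The key structural fact is that $M:=p^2+pq+q^2=N_{\Q(\omega)/\Q}(p-q\omega)>0$ is a norm from $\Z[\omega]$, so every rational prime $\equiv 2\pmod 3$ divides $M$ to an even power (being inert in $\Z[\omega]$); in particular $2$ divides $M$ to an even power.

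The crux is to contradict $(\star)$ using the primality of $\ell$. Since $M\ge 1$ we have $|t|<\ell$, so $\ell\nmid t$ and, both $\ell\pm t$ being even, $\gcd(\ell-t,\ell+t)=2$. From $t\equiv 5$, $\ell\equiv 1\pmod{12}$ I read off $\ell+t\equiv 6$ and $\ell-t\equiv 8\pmod{12}$; thus $\ord_2(\ell+t)=1$ while $\ord_2(\ell-t)=4+\ord_2(M)$ is even. Writing $\ell+t=2v$ and $\ell-t=2^{a}u$ with $u,v$ odd and coprime, $(\star)$ gives $uv=M_{\mathrm{odd}}$ (the odd part of $M$). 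The congruences give $3\mid v$ but $u\equiv 2\pmod 3$. On the other hand, because $\gcd(u,v)=1$ each prime power of $M_{\mathrm{odd}}$ lies entirely in $u$ or in $v$; since $u$ is coprime to $3$ and the primes $\equiv 2\pmod 3$ occur in the norm $M_{\mathrm{odd}}$ to even powers, every prime factor of $u$ contributes $1\pmod 3$, whence $u\equiv 1\pmod 3$ — a contradiction.

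I expect this final step to be the main obstacle, and it is also the conceptual heart of the matter: the statement is a genuinely global (spinor) phenomenon, reflecting the two spinor genera of the coset, so there is \emph{no} congruence obstruction (indeed $3\ell^2$ is always represented by the genus) and the primality of $\ell$ must be used essentially. The argument extracts it precisely through the coprimality $\gcd(\ell-t,\ell+t)=2$ combined with the even‑exponent parity of inert primes in $\Z[\omega]$, which is what finally turns ``$M$ is a value of $x^2+xy+y^2$'' into the forbidden residue $u\equiv 2\pmod 3$. For part (2) one has $\ell\equiv 3\pmod 4$, so instead $\ord_2(\ell-t)=1$ and $\ord_2(\ell+t)$ is even; the same computation then exhibits the odd part of $\ell+t$ as simultaneously $\equiv 2\pmod 3$ and a divisor of a norm coprime to its cofactor and to $3$, yielding the contradiction.
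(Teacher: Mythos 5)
Your proposal is correct, and it proves the theorem by a genuinely different route than the paper. The paper's proof is analytic and rests on Proposition \ref{prop:SiegelWeil} (itself established by a finite computer check of $27648$ Fourier coefficients): it computes the $3\ell^2$-th coefficient of $\mathcal{E}_{\genus(L+\nu)}$ as $\ell/8$ via the Hurwitz class number evaluation $H(3\ell^2)=\tfrac13\bigl(\ell+1-(\tfrac{-3}{\ell})\bigr)$, observes that $\tfrac18\vartheta_{1,3,12}$ contributes $(\tfrac{-4}{\ell})\ell/8$, and uses the exact cancellation together with nonnegativity of the coefficients of each $\Theta_{M+\nu'}$ across the spinor genus to force the vanishing; part (2) follows by passing to the other spinor genus $\spin(L+5\nu)$. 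You instead argue elementarily, and I verified the details: the identity $3(X^2+Y^2+Z^2)=(X+Y+Z)^2+(X-Y)^2+(Y-Z)^2+(Z-X)^2$ does reduce the equation to $\ell^2=t^2+32M$ with $M=p^2+pq+q^2$; since $X=t+8p+4q$, one has $X\equiv t\pmod{4}$ and $X\equiv t-(p-q)\pmod{3}$, the reduction of $(\star)$ mod $3$ kills the case $p\not\equiv q\pmod 3$ (it would force $t^2\equiv 2\pmod 3$), and the coset condition then gives $t\equiv 5\pmod{12}$ in part (1), resp.\ $t\equiv 1\pmod{12}$ in part (2); the degenerate case $M=0$ is excluded because $\pm\ell\not\equiv 5$ (resp.\ $\not\equiv 1$) mod $12$, so $0<|t|<\ell$ and primality of $\ell$ makes the odd parts of $\ell-t$ and $\ell+t$ coprime; finally, since $2$ and all primes $r\equiv 2\pmod 3$ are inert in $\Z[\omega]$ and hence divide the norm $M$ to even powers, the parity of $\ord_2$ in $32M=(\ell-t)(\ell+t)$ pins the odd part $u$ of $\ell-t$ (resp.\ $\ell+t$) to satisfy $u\equiv 2\pmod 3$, while the even-exponent structure of $M_{\mathrm{odd}}=uv$ with $\gcd(u,v)=1$ and $3\nmid u$ forces $u\equiv 1\pmod 3$ --- a genuine contradiction, with the two parts correctly swapping the roles of $\ell\pm t$. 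As for what each approach buys: yours is self-contained, computer-free, and effective, using the primality of $\ell$ only through the coprimality step, and it would even extend to suitable non-prime $\ell$; the paper's argument is heavier but is the point of its Section \ref{sec:link} --- it exhibits the vanishing as a spinor-genus phenomenon for lattice cosets, produces the unary theta function explicitly, yields the complementary statement for the other spinor genus in the same stroke, and supplies the evidence motivating Conjecture \ref{conj:SiegelWeil}, none of which your elementary argument illuminates.
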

\begin{proof}
\noindent

\noindent
(1)  As in the proof of Proposition \ref{prop:SiegelWeil} (and as in Section \ref{sec:algebraic}), we let $L+\nu$ be the lattice coset associated to $P_{14}$.  The claim is equivalent to the statement that $L+\nu$ does not represent $3\ell^2$ for all $\ell\equiv 1\pmod{12}$.  Since Conjecture \ref{conj:SiegelWeil} is true for the spinor genus of $L+\nu$ by Proposition \ref{prop:SiegelWeil} and the Fourier coefficients of each $\Theta_{M+\nu'}$ are non-negative, the $3\ell^2$-th coefficient of the theta function $\Theta_{\spin(L+\nu)}$ is zero if and only if the $3\ell^2$-th coefficient of $\Theta_{M+\nu'}$ is zero for all $M+\nu'\in \spn(L+\nu)$.  In particular, this implies that if the $3\ell^2$-th coefficient of $\Theta_{\spin(L+\nu)}$ always vanishes, then the claim is true. 

We next show that these coefficients of $\Theta_{\spin(L+\nu)}$ do indeed vanish. In order to show this, we explicitly compute the Eisenstein series $\Theta_{\genus(L+\nu)}=\mathcal{E}_{\genus(L+\nu)}$ and the linear combination of unary theta functions $\mathcal{U}_{\spin(L+\nu)}$.  By \eqref{eqn:Thetaspnm=14}, we have 
\[
\mathcal{U}_{\spin(L+\nu)}=-\frac{1}{8}\vartheta_{1,3,12}.
\]
We next use \eqref{eqn:Thetagenm=14} to compute the Eisenstein series component $\mathcal{E}_{\genus(L+\nu)}$.  To ease notation, we define the \begin{it}sieve operator\end{it} $S_{N,\ell}$, acting on Fourier expansions $f(\tau)=\sum_{n\geq 0 }a_f(n) q^n$ by
\[
f|S_{N,\ell}(\tau):=\sum_{\substack{n\geq 0\\ n\equiv \ell\pmod{N}}} a_f(n)q^n.
\]
Then a straightforward elementary calculation (by splitting the representations $x^2+y^2+z^2=24n+3$ via the congruence classes of $x$, $y$, and $z$) yields
\[
\Theta^3\big|S_{24,3}(\tau)= 48\left( \frac{\Theta_{L+5\nu}(\tau)}{6} + \frac{\Theta_{L+5\mu}(\tau)}{2}+\frac{\Theta_{L+\nu}(\tau)}{6} + \frac{\Theta_{L+\mu}(\tau)}{2}\right) + 8\Theta^3(3\tau)\big|S_{24,3}.
\]
Hence by \eqref{eqn:Thetagenm=14}, we have
\[
\mathcal{E}_{\genus(L+\nu)}=\frac{1}{64}\left(\Theta^3\big|S_{24,3}(\tau)-\frac{1}{8}\Theta^3(3\tau)\big|S_{24,3}\right)
\]
In particular, the $3\ell^2$-th coefficient of $\mathcal{E}_{\genus(L+\nu)}$ is exactly the number of ways to write $3\ell^2$ as the sum of $3$ squares.  By \cite[Theorem 86]{Jones}, since the quadratic form $Q(x,y,z)=x^2+y^2+z^2$ has class number $1$, this coefficient is given by 
\[
24H\!\left(3\ell^2\right),
\]
where 
\[
H(d):=\sum_{\substack{f\in\N\\ -\frac{d}{f^2}\equiv 0,1\pmod{4}}}\frac{h\!\left(-\frac{d}{f^2}\right)}{u\!\left(-\frac{d}{f^2}\right)}
\]
denotes the Hurwitz class number, with $h(D)$ denoting the usual class number and $u(D)$ being half the size of the automorphism group of the order of discriminant $D$ in $\Q(\sqrt{D})$.  

However, for $d=3\ell^2$, the class number formula \cite[Corollary 7.28, page 148]{Cox} and $h(-3)=1$ (as well as the fact that $u(-3)=3$ and $u(-3r^2)=1$ for $r>1$) imply that (for $\ell$ prime)
\begin{equation}\label{eqn:Heval}
H\!\left(3\ell^2\right) = \frac{h(-3)}{3} + h\!\left(-3\ell^2\right) = \frac{1}{3}+ \frac{1}{3}\left(\ell-\!\left(\frac{-3}{\ell}\right)\right)= \frac{1}{3}\!\left(\ell +1-\!\left(\frac{-3}{\ell}\right)\right).
\end{equation}
Here $(-3/\ell)$ is the Kronecker--Jacobi--Legendre symbol, which for $\ell\equiv 1\pmod{3}$ is $1$ in particular.  Thus for prime $\ell\equiv 1\pmod{3}$, the coefficient of $3\ell^2$ in $\mathcal{E}_{\genus(L+\nu)}$ is 
\begin{equation}\label{eqn:Eiscoeffm=14}
\frac{1}{64}\cdot 24 \cdot \frac{\ell}{3}= \frac{\ell}{8}.
\end{equation}
At the same time,
\begin{equation}\label{eqn:hrewrite}
\vartheta_{1,3,12}(\tau)= \sum_{\substack{n\in\Z\\ n\equiv 1\pmod{4}}} n q^{3n^2} = \sum_{n\geq 0} \left(\frac{-4}{n}\right) n q^{3n^2}.
\end{equation}
Thus the $3n^2$-th coefficient of $\vartheta_{1,3,12}(\tau)/8$ is $\left(\frac{-4}{n}\right) n/8$.  For $n=\ell\equiv 1\pmod{4}$, we specifically have $\ell/8$, which cancels with the coefficient from $\mathcal{E}_{\genus(L+\nu)}$.  Hence by Proposition \ref{prop:SiegelWeil} (in particular, see \eqref{eqn:Thetaspnm=14}), the $3\ell^2$-th coefficient of $\Theta_{\spin(L+\nu)}$ is zero.  This yields the claim.
\noindent
\vspace{0.05in}

\noindent
(2) We argue similarly to part (1), except this time we use the lattice coset $L+5\nu$ instead of $L+\nu$ (the statement is a rewording of the claim that $L+5\nu$ does not represent any integer of the form $3\ell^2$ with $\ell\equiv 7\pmod{12}$ prime).  The classes in the spinor genus of $L+5\nu$ are given by $L+5\nu$ and $L+5\mu$.  Moreover, by \eqref{eqn:Thetagenm=14} we have 
\[
\Theta_{\genus(L+5\nu)}=\Theta_{\genus(L+\nu)}= \frac{1}{2}\left(\Theta_{\spin(L+\nu)}+\Theta_{\spin(L+5\nu)}\right).  
\]
Rearranging and plugging in \eqref{eqn:Thetaspnm=14}, we have 
\[
\Theta_{\spin(L+5\nu)} = 2\Theta_{\genus(L+\nu)} -\Theta_{\spin(L+\nu)} \overset{\eqref{eqn:Thetaspnm=14}}{=} \Theta_{\genus(L+\nu)}+\frac{1}{8}\vartheta_{1,3,12}(\tau).  
\]
We then use \eqref{eqn:Heval} and \eqref{eqn:hrewrite} to compute the $3\ell^2$-th coefficient of each side.  For $\ell\equiv 7\pmod{12}$ prime, we have $(-3/\ell)=1$ so that \eqref{eqn:Eiscoeffm=14} yields that the $3\ell^2$-th coefficient of the Eisenstein series $\Theta_{\genus(L+\nu)}$ is precisely $\ell/8$.  Since $(-4/\ell)=-1$ for $\ell\equiv 7\pmod{12}$, the coefficient of $\vartheta_{1,3,12}(\tau)/8$ is $-\ell/8$, giving cancellation.  We conclude that the spinor genus of $L+5\nu$ does not represent $3\ell^2$ by Proposition \ref{prop:SiegelWeil}.
\end{proof}

We are now ready to prove Theorem \ref{thm:P14}.
\begin{proof}[Proof of Theorem \ref{thm:P14}]
For $m\equiv 2\pmod{12}$, we write $m=12r+2$.  We claim that, in particular, $P_m$ does not represent $n$ whenever 
\[
2(m-2)n + 3\left(\frac{m-4}{2}\right)^2 = 3\ell^2,
\]
where $\ell$ is any prime satisfying 
\begin{equation}\label{eqn:ellmod12}
 \begin{cases} \ell\equiv 1\pmod{12} & \text{if $r$ is odd,}\\
\ell\equiv 7\pmod{12} & \text{if $r$ is even.}
\end{cases}
\end{equation}
Note first that $P_m$ represents $n$ if and only if there exist $x,y,z\in\Z$ such that 
\begin{align}
\nonumber 24rn+3(6r-1)^2 &= 2(m-2)n+3\!\left(\frac{m-4}{2}\right)^2\\
\nonumber&=\!\left((m-2)x+\frac{m-4}{2}\right)^2\!\!+\!\left((m-2)y+\frac{m-4}{2}\right)^2\!\!+\!\left((m-2)z+\frac{m-4}{2}\right)^2\\
\label{eqn:badident}&= \left(12rx+6r-1\right)^2+\left(12ry+6r-1\right)^2+\left(12rz+6r-1\right)^2.
\end{align}
Notice that since $(6r-1)^2\equiv 1\pmod{24}$, the left hand side of \eqref{eqn:badident} is congruent to $3$ modulo $24$, so we may write it in the shape $24n'+3$ for some $n'$.  Writing $X:=12rx+6r-1$, $Y:=12ry+6r-1$, and $Z:=12rz+6r-1$, if \eqref{eqn:badident} holds, then there hence exist $X,Y,Z\in\Z$ with $X\equiv Y\equiv Z\equiv 6r-1 \pmod{12}$ for which $X^2+Y^2+Z^2=24n'+3$.  In particular, if $24n'+3=3\ell^2$ with $\ell$ a prime satisfying \eqref{eqn:ellmod12}, then Theorem \ref{thm:XYZmod12} implies that \eqref{eqn:badident} is not solvable.  

Therefore $P_m$ is not almost universal if there are infinitely many primes $\ell$ satisfying \eqref{eqn:ellmod12} for which $3\ell^2$ is in the set 
\[
\mathcal{S}:=\left\{ 2(m-2)n+3\!\left(\frac{m-4}{2}\right)^2: n\in\N_0\right\}=\left\{ 24rn+3(6r-1)^2: n\in\N_0\right\}.
\]
Hence, we need to find infinitely many $\ell$ satisfying \eqref{eqn:ellmod12} and $3\ell^2\equiv 3\pmod{24r}$, or in other words, we want  $\ell^2\equiv 1\pmod{8r}$ and $\ell\equiv 1\pmod{12}$ if $r$ is odd and $\ell\equiv 7\pmod{12}$ if $r$ is even.  For $r$ odd, we take $\ell\equiv 1\pmod{12r}$ sufficiently large.  For $r=2^a3^br'$ with $a>0$, we require $\ell\equiv 7\pmod{12}$ and $\ell^2\equiv 1\pmod{8r}$.  By the Chinese Remainder Theorem and Hensel's Lemma, there are infinitely many $\ell\equiv 1\pmod{r'}$, $\ell\equiv 1\pmod{3^{b+1}}$, and $\ell\equiv -1\pmod{4}$ such that $\ell^2\equiv 1\pmod{2^{3+a}}$, and these $\ell$ satisfy the desired congruences.   Therefore, there are infinitely many $\ell$ satisfying \eqref{eqn:ellmod12} for which $3\ell^2\in\mathcal{S}$ by the existence of infinitely many primes in arithmetic progressions.  Each such $\ell$ corresponds to some $n$ which is not represented by $P_m$, yielding the claim.

\end{proof}

\end{document}